\documentclass[a4paper, 11pt]{article}

\usepackage{amsmath,amssymb ,amstext ,mathrsfs, extarrows}
\usepackage[utf8]{inputenc}
\usepackage{float}
\usepackage{dsfont}
\usepackage{yfonts}
\usepackage{amsfonts}
\usepackage{geometry}
\usepackage{stmaryrd}
\usepackage{mathabx}
\usepackage{hyperref}
\usepackage{tikz}
\usetikzlibrary{cd}
\geometry{
	left=3.25cm,
	right=3.25cm,
	top=2cm,
	bottom=4cm,
}
\usepackage{xfrac}
\usepackage[arrow, matrix, curve]{xy}
\usepackage{titling}
\predate{}
\postdate{}
\usepackage[english,german]{babel}

\usepackage[explicit]{titlesec}
\usepackage{lipsum}

\usepackage{amsthm}
\newtheorem{thm}{Theorem}[section]
\newtheorem*{thm*}{Theorem}
\newtheorem{lem}[thm]{Lemma}
\newtheorem{prop}[thm]{Proposition}
\newtheorem{cor}[thm]{Corollary}

\theoremstyle{definition}
\newtheorem{defi}[thm]{Definition}

\theoremstyle{remark}
\newtheorem{rem}[thm]{Remark}

\DeclareMathOperator{\MOD}{Mod}

\newcommand{\Z}{\mathbb{Z}}
\newcommand{\Q}{\mathbb{Q}}

\renewcommand{\1}{\mathds{1}}

\DeclareMathOperator{\GL}{GL}

\DeclareMathOperator{\Ind}{Ind}
\DeclareMathOperator{\cind}{c-Ind}
\DeclareMathOperator{\Hom}{Hom}

\DeclareMathOperator{\cts}{cts}

\DeclareMathOperator{\Ban}{Ban}
\DeclareMathOperator{\adm}{adm}
\newcommand{\Badm}{\Ban^{\adm}}

\DeclareMathOperator{\Ker}{Ker}

\newcommand{\inv}{^{-1}}
\DeclareMathOperator{\locan}{la}
\newcommand{\la}{{\locan}}
\DeclareMathOperator{\image}{im}
\DeclareMathOperator{\lc}{lc}
\DeclareMathOperator{\co}{co}
\DeclareMathOperator{\id}{id}

\title{\vspace{-2cm}\textbf{Continuous group cohomology with coefficients in locally analytic vectors of admissible $ \Q_p $-Banach space representations}}
\author{Paulina Fust}
\date{}
\begin{document}
	\maketitle
	\selectlanguage{english}
	\begin{abstract}
		We show that the continuous cohomology groups of a $ p $-adic reductive group with coefficients in the locally analytic vectors of an admissible $ \Q_p $-Banach space representation are homeomorphic to those with coefficients in the Banach space representation itself. Moreover, we deduce that the canonical topologies on those continuous cohomology groups are Hausdorff and are the uniquely determined finest locally convex topologies.
	\end{abstract}
	
	\section{Introduction} \label{Intro}
	Let $ G $ be a $ p $-adic reductive group and $ L $ a finite field extension of $ \Q_p $. For an admissible unitary $ L $-Banach space representation $ \Pi $ of $ G $, we study the continuous group cohomology of $ G $ with coefficients in the subrepresentation $ \Pi^\la $ of $ \Pi $ consisting of locally analytic vectors. In the case, where $ G $ is a compact group, Rodrigues Jacinto and Rodríguez Camargo proved in \cite[Corollary 1.6]{rodrigues2022solid}, that these cohomology groups $ H^i(G,\Pi^\la) $ are isomorphic to the cohomology groups with coefficients in the representation $ \Pi $ itself. We use this result as key ingredient for our proof of the following Theorem:
	
	\begin{thm}[Proposition \ref{prop: adm and la same}]\label{intro thm adm=la}
		Let $ \Pi \in \Badm_G(L) $ be an admissible $ L $-Banach space representation of $ G $ and let $ \Pi^\la $ be the subspace of locally analytic vectors in $ \Pi $, equipped with its natural topology. The inclusion $ \Pi^\la\hookrightarrow \Pi $ induces natural isomorphisms between continuous group cohomology groups\[ H^i(G,\Pi^\la)\cong H^i(G,\Pi),\ \forall i\ge 0. \]
	\end{thm}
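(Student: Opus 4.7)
The plan is to reduce the continuous cohomology of $G$ to that of compact open subgroups, where the result of Rodrigues Jacinto and Rodríguez Camargo in \cite{rodrigues2022solid} directly gives the analogous isomorphism $H^i(K, \Pi^\la) \cong H^i(K, \Pi)$ for any compact open $K \subseteq G$. A natural tool for such a reduction is the Schneider--Stuhler style resolution of the trivial $G$-representation coming from the Bruhat--Tits building $\B$ of $G$:
\[ \cdots \to \bigoplus_{\sigma \in \Sigma_i} \cind_{K_\sigma}^{G} \oriented_\sigma \to \cdots \to \cind_{K_0}^G \1 \to \1 \to 0, \]
where $\Sigma_i$ is a set of representatives for the $G$-orbits of $i$-cells of $\B$, $K_\sigma$ is the compact open stabilizer of $\sigma$, and $\oriented_\sigma$ is an orientation character.

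Applying $\Hom_G^{\cts}(-, V)$ for $V \in \{\Pi, \Pi^\la\}$ and invoking a continuous Shapiro lemma of the form $\Hom_G^{\cts}\bigl(\cind_{K_\sigma}^G \oriented_\sigma, V\bigr) \cong \Hom_{K_\sigma}^{\cts}(\oriented_\sigma, V)$ should produce a spectral sequence whose terms involve the continuous cohomology groups $H^q(K_\sigma, V \otimes \oriented_\sigma^{-1})$ and which converges to $H^{p+q}(G, V)$. The inclusion $\Pi^\la \hookrightarrow \Pi$ then induces a morphism between the two spectral sequences (for $V = \Pi^\la$ and $V = \Pi$), and by \cite[Corollary 1.6]{rodrigues2022solid} applied termwise on each compact open $K_\sigma$, this morphism is an isomorphism on the initial page. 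Standard comparison then forces it to be an isomorphism on the abutment, giving the desired $H^i(G, \Pi^\la) \cong H^i(G, \Pi)$.

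The main obstacle is verifying that this construction goes through cleanly when $V = \Pi^\la$, which is not a Banach space but carries a compact-inductive-limit topology. Concretely, one has to check that the continuous Shapiro isomorphism, the construction of the spectral sequence, and its convergence all remain valid in this setting, and that the map induced by the inclusion $\Pi^\la \hookrightarrow \Pi$ agrees on the initial page with the Rodrigues Jacinto--Rodríguez Camargo comparison isomorphism for each $K_\sigma$. The topological refinements advertised in the abstract (Hausdorffness and the characterization as the finest locally convex topology) would then have to be addressed by a separate analysis of the natural topologies on $H^i(G, \Pi^\la)$ and $H^i(G, \Pi)$, transported across the spectral sequence identifications.
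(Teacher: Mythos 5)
Your overall strategy is essentially the one the paper follows: take the Schneider--Stuhler resolution of the trivial representation coming from the Bruhat--Tits building, feed it into a double complex against $C^\bullet(G,V)$ to get a spectral sequence with first-page terms built from cohomology of facet stabilizers, and transport the compact-group isomorphism of \cite[Corollary 1.6]{rodrigues2022solid} along a morphism of spectral sequences to the abutment. However, there is a genuine gap in the key step: you assert that the stabilizers $K_\sigma$ appearing in the resolution are \emph{compact} open subgroups, but the $G$-stabilizers $P_F^\dagger$ of facets, which are what the resolution actually uses, are open but \emph{not} compact in general — for instance they always contain the center of $G$, which is non-compact unless $G$ is semisimple. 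Consequently \cite[Corollary 1.6]{rodrigues2022solid} cannot be applied directly to the $E_1$-terms $H^j(P_F^\dagger, V\otimes_L\delta_F^{-1})$, and this is exactly the point where your argument as written breaks down.

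The paper fills this gap with an intermediate Hochschild--Serre reduction. Each $P_F^\dagger$ contains a compact open subgroup $R_F$ which is normal in $P_F^\dagger$; the paper establishes a Hochschild--Serre spectral sequence $H^i(P_F^\dagger/R_F, H^j(R_F,V))\Rightarrow H^{i+j}(P_F^\dagger,V)$ for an open normal subgroup (Lemma \ref{lem hochschild-serre}), applies the result of \cite{rodrigues2022solid} only to the compact group $R_F$, and compares spectral sequences to obtain $H^j(P_F^\dagger,\Pi^\la)\cong H^j(P_F^\dagger,\Pi)$ (Lemma \ref{lem. H(P_F,la)}); only then does the comparison of building spectral sequences give the theorem. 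Your secondary worry about the compact-inductive-limit topology on $\Pi^\la$ is, by contrast, not a real obstacle here: the spectral sequence \eqref{spectral sequence general} is constructed in the paper for an arbitrary topological $G$-module on an $L$-vector space, so $\Pi^\la$ fits into the framework without further argument. The topological refinements in the abstract are then deduced separately in Corollaries \ref{cor Hausdorff} and \ref{cor finest locally convex}, as you anticipated.
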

	
	Here, we consider the subrepresentation $ \Pi^\la $ equipped with the topology defined by Emerton in Section 3 of \cite{emerton-locallyanalytic}, which is finer than the subspace topology coming from the inclusion $ \Pi^\la\subseteq\Pi $. To prove Theorem \ref{intro thm adm=la}, we construct for any continuous representation of $ G $ on an $ L $-vector space $ V $, a spectral sequence converging to the continuous cohomology of $ G $ with coefficients in $ V $, whose first page terms are certain continuous cohomology groups of stabilizers of facets in the Bruhat--Tits building of $ G $. This allows us to deduce Theorem \ref{intro thm adm=la} from the statement for compact groups.
%
%
%
%
	In Corollary 6.10 in \cite{fust2021}, we proved that the continuous cohomology groups $ H^i(G,\Pi) $, for $ \Pi $ an admissible $ L $-Banach space representation, are finite dimensional. This combined with Theorem \ref{intro thm adm=la} implies the following corollary.
	
	\begin{cor}[Corollary \ref{cor H(G,Pi^la) finite}] \label{intro cor H(G,Pi^la) finite}
		For every $ \Pi \in \Badm_G(L) $, the continuous cohomology groups $ H^i(G,\Pi^\la) $ are finite dimensional over $ L $ for all $ i\ge 0 $.
	\end{cor}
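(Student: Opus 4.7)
The plan is essentially a direct combination of the two ingredients already highlighted in the excerpt, so the ``proof'' is genuinely short and there is no real obstacle to overcome.

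First I would invoke Theorem \ref{intro thm adm=la} (proved as Proposition \ref{prop: adm and la same}): for every admissible unitary $L$-Banach space representation $\Pi$ of $G$, the inclusion $\Pi^{\la}\hookrightarrow\Pi$ induces natural isomorphisms
\[ H^i(G,\Pi^{\la})\;\cong\;H^i(G,\Pi) \]
of continuous group cohomology groups, for every $i\ge 0$. This reduces the finite-dimensionality assertion for $H^i(G,\Pi^{\la})$ to the analogous assertion for $H^i(G,\Pi)$.

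Next I would cite Corollary 6.10 of \cite{fust2021}, which asserts precisely that $H^i(G,\Pi)$ is finite dimensional over $L$ for every admissible $L$-Banach space representation $\Pi\in\Badm_G(L)$ and every $i\ge 0$. Combining the two yields the desired finite dimensionality of $H^i(G,\Pi^{\la})$.

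The only point worth checking carefully is that the isomorphism supplied by Theorem \ref{intro thm adm=la} is really an isomorphism of $L$-vector spaces (so that transport of finite dimensionality is legitimate); this is clear from the construction, since it is induced by the $L$-linear and $G$-equivariant inclusion $\Pi^{\la}\hookrightarrow\Pi$. No further argument is required, and there is no substantial obstacle: the statement is recorded as a corollary precisely because both inputs are already in place.
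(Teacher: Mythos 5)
Your proposal matches the paper's own proof exactly: it combines Proposition \ref{prop: adm and la same} with Corollary 6.10 of \cite{fust2021} to transfer finite-dimensionality from $H^i(G,\Pi)$ to $H^i(G,\Pi^\la)$. The added remark that the isomorphism is $L$-linear is a harmless clarification and does not change the argument.
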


	For a topological $ G $-module $ V $, the spaces of continuous $ i $-cochains $ C^i(G,V) $ are equipped with the compact-open topology. We thus get a quotient topology on the continuous cohomology groups $ H^i(G,V) $, called the canonical topology. Moreover, if the $ G $-module $ V $ is a locally convex $ L $-vector space, then we show that the compact-open topology on $ C^i(G,V) $, as well as the canonical topology on $ H^i(G,V) $ is locally convex. The canonical topology is in general not Hausdorff. However, Theorems \ref{intro cor H(G,Pi^la) finite} and \ref{intro thm adm=la} allow us to apply a criterion for the cohomology groups being Hausdorff in \cite{CW} to deduce the following.
	
	\begin{cor}[Corollary \ref{cor Hausdorff}] \label{intro cor hausdorff}
		Suppose that $ G $ is a $ p $-adic reductive group and $ \Pi $ an admissible $ L $-Banach space representation of $ G $. Then for all $ i\ge 0 $, the canonical topologies on the continuous cohomology groups $ H^i(G,\Pi) $ and $ H^i(G,\Pi^\la) $ are Hausdorff.
	\end{cor}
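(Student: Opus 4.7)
The plan is to deduce Hausdorff-ness formally from the two preceding results, combined with the criterion for closedness of coboundaries recalled from \cite{CW}. The canonical topology on $H^i(G,V) = \ker(d^i)/\image(d^{i-1})$ is the subquotient of the compact-open topology on $C^i(G,V)$, so it is Hausdorff precisely when $\image(d^{i-1})$ is closed in $\ker(d^i)$. As noted in the introduction, the cochain complex is a complex of locally convex $L$-vector spaces, so this is a meaningful finiteness-type statement about continuous linear maps, and both the input (finite-dimensional cohomology) and the target (closedness of a continuous image) are available to us.

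For $H^i(G,\Pi)$ I would apply the criterion of \cite{CW} directly. The cochain spaces $C^i(G,\Pi)$, being spaces of continuous maps from $G^i$ into a Banach space equipped with the compact-open topology, form a complex of sufficiently complete locally convex spaces, so the criterion is applicable. By Corollary 6.10 of \cite{fust2021}, $H^i(G,\Pi)$ is finite dimensional over $L$, which is exactly the hypothesis forcing the coboundary to have closed image. The resulting quotient topology on $H^i(G,\Pi)$ is therefore Hausdorff.

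For $H^i(G,\Pi^\la)$, I would transport Hausdorff-ness through Theorem \ref{intro thm adm=la}: the natural map $H^i(G,\Pi^\la) \to H^i(G,\Pi)$ is a topological isomorphism (this is exactly the homeomorphism claim of the abstract), so the Hausdorff property on the right passes to the left. Alternatively one can argue directly by invoking Corollary \ref{intro cor H(G,Pi^la) finite} and applying the \cite{CW} criterion to the complex $C^\bullet(G,\Pi^\la)$. The main obstacle lies in this alternative route: the natural topology on $\Pi^\la$ is a locally convex inductive limit of Banach spaces rather than a Banach topology, and verifying that $C^\bullet(G,\Pi^\la)$ satisfies the completeness and barrelledness hypotheses packaged into the \cite{CW} criterion requires real functional-analytic work. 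Going through Theorem \ref{intro thm adm=la} is the cleanest way to sidestep this, reducing the whole question to the already-handled Banach case.
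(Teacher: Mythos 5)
Your plan for $H^i(G,\Pi)$ matches the paper: invoke the $\sigma$-compactness of a $p$-adic reductive group, the completeness of the Banach space $\Pi$, and the finite-dimensionality from \cite[Corollary~6.10]{fust2021}, and apply Proposition~6 of \cite{CW}. (The paper actually records that \cite{CW} delivers a slightly different conclusion, ``strongly Hausdorff,'' i.e.\ the inclusion $B^i(G,\Pi)\hookrightarrow Z^i(G,\Pi)$ has a continuous section, and then spends a paragraph showing that this implies the quotient is genuinely Hausdorff; but you are right that this is where the deduction lives.)

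For $H^i(G,\Pi^\la)$, however, your argument is circular. You write that the natural map $H^i(G,\Pi^\la)\to H^i(G,\Pi)$ ``is a topological isomorphism (this is exactly the homeomorphism claim of the abstract).'' But the homeomorphism is Corollary~\ref{cor finest locally convex}, and its proof \emph{uses} Corollary~\ref{cor Hausdorff}: the paper first establishes Hausdorff-ness, then combines it with finite-dimensionality and local convexity to conclude that both cohomology groups carry the unique such topology, which is how the homeomorphism is obtained. At the point where Corollary~\ref{cor Hausdorff} is being proved, Proposition~\ref{prop: adm and la same} gives only an abstract $L$-linear isomorphism that happens to be continuous in one direction. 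Using the homeomorphism here would assume what you want to prove.

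The fix is small and it is what the paper does: the induced map $H^i(G,\Pi^\la)\to H^i(G,\Pi)$ is a \emph{continuous bijection} (continuity comes from the continuity of $\Pi^\la\hookrightarrow\Pi$, passed through cochains and quotients), so the topology on $H^i(G,\Pi^\la)$ is finer than the pullback of the Hausdorff topology on $H^i(G,\Pi)$; a topology finer than a Hausdorff one is Hausdorff. Equivalently, a continuous injection into a Hausdorff space has Hausdorff source. This bypasses exactly the functional-analytic obstruction you correctly flagged for applying the \cite{CW} criterion directly to $C^\bullet(G,\Pi^\la)$, which is why the paper does not take that route either. Your instinct that the comparison with $\Pi$ is the clean path was right; you just need the weaker ``continuous bijection'' form of it, not the homeomorphism, to avoid circularity.
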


	We use Corollary \ref{intro cor hausdorff} to deduce that the isomorphisms in Theorem \ref{intro thm adm=la} are in fact homeomorphisms.

	\begin{cor}[Corollary \ref{cor finest locally convex}]
		The canonical topology on the continuous cohomology groups $ H^i(G,\Pi^\la) $ and $ H^i(G,\Pi) $ is the finest locally convex topology. In particular, for all $ i\ge 0 $, we have homeomorphisms \[ H^i(G,\Pi^\la)\cong H^i(G,\Pi). \]
	\end{cor}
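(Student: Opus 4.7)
The plan is to combine the three preceding results with the uniqueness of Hausdorff locally convex topology on finite-dimensional $L$-vector spaces. By the discussion preceding Corollary~\ref{intro cor hausdorff}, the canonical topology on $H^i(G,V)$ is locally convex whenever $V$ is a locally convex $L$-representation, so both $H^i(G,\Pi)$ and $H^i(G,\Pi^\la)$ carry canonical locally convex topologies. Combined with Corollary~\ref{intro cor H(G,Pi^la) finite} (finite-dimensionality) and Corollary~\ref{intro cor hausdorff} (Hausdorffness), each of them is a finite-dimensional Hausdorff locally convex $L$-vector space.

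At this point I would invoke the standard fact from non-archimedean functional analysis (cf.\ Schneider, \emph{Nonarchimedean Functional Analysis}, Prop.~4.13) that, $L$ being a finite extension of $\Q_p$, a finite-dimensional $L$-vector space carries a unique Hausdorff locally convex topology, and that this topology coincides with the finest locally convex topology on the underlying vector space. This identifies the canonical topology on each of $H^i(G,\Pi)$ and $H^i(G,\Pi^\la)$ with the finest locally convex topology, which settles the first assertion.

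For the homeomorphism, the continuous inclusion $\Pi^\la\hookrightarrow\Pi$ induces continuous $L$-linear maps $C^i(G,\Pi^\la)\to C^i(G,\Pi)$ in the compact-open topology, hence continuous maps $H^i(G,\Pi^\la)\to H^i(G,\Pi)$ on the quotients, and these are precisely the abstract isomorphisms of Theorem~\ref{intro thm adm=la}. Since any continuous $L$-linear bijection between finite-dimensional Hausdorff locally convex $L$-vector spaces is automatically a homeomorphism, the result follows. I do not anticipate a substantive obstacle here: the entire argument is a formal packaging of the three preceding corollaries together with the non-archimedean uniqueness-of-topology input, the only mild point of care being that the map induced on cohomology is genuinely continuous, which is immediate from the continuity of $\Pi^\la\hookrightarrow\Pi$ and the functoriality of the cochain complex.
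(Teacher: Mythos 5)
Your argument is correct and follows the paper's proof essentially verbatim: assemble finite-dimensionality (Corollary~\ref{cor H(G,Pi^la) finite}), Hausdorffness (Corollary~\ref{cor Hausdorff}), and local convexity (Corollary~\ref{cor H(G,V) locally convex}), then invoke the uniqueness of the Hausdorff locally convex topology on a finite-dimensional $L$-vector space (Schneider, Prop.~4.13). The only cosmetic difference is in the final step: you verify that the induced map on cohomology is continuous and appeal to the fact that a continuous $L$-linear bijection of finite-dimensional Hausdorff spaces is a homeomorphism, whereas the paper simply observes that once both sides carry the uniquely determined topology, any $L$-linear isomorphism is automatically a homeomorphism, making the continuity check superfluous (though of course harmless).
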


	\begin{rem}
		The finest locally convex topology on the finite dimensional cohomology groups $ H^i(G,\Pi) $ and $ H^i(G,\Pi^\la) $ can be described more explicitly: Let $ V $ be a finite dimensional $ L $-vector space. Then there is only one way to equip $ V $ with a topology making it a locally convex Hausdorff $ L $-vector space, i.e.~the finest locally convex topology. Namely, it is the topology defined by the norm $ \Vert \sum_{i=1}^n\lambda_i e_i \Vert := \max_{1\le i \le n}\vert \lambda_i\vert$, where $ e_1,\dots, e_n $ is an $ L $-basis of $ V $ (Proposition 4.13 \cite{schneider2001nonarchimedean}).
	\end{rem}
	
		\textit{Acknowledgements}: This work was written as part of my PhD thesis under the supervision of Vytautas Pa\v{s}k\={u}nas. I want to thank him for his support and Nicolas Dupré and Jonas Franzel for their helpful comments and suggestions. This work was funded by the DFG Graduiertenkolleg 2553. 
	\section{Continuous group cohomology}
	For coherence, we briefly recall the definition of continuous group cohomology for a locally profinite group $ G $ and collect some basic properties needed in later sections.
	
	Following \cite{CW}, we define the continuous group cohomology as follows:
	Let $ V $ be a topological $ G $-module, i.e.~a topological abelian group $ V $ with a $ G $-action such that $ G $ acts on $ V $ via group automorphisms and the map $ G\times V\rightarrow V $ is continuous. 
	Then we can define the cochain complex $$ C^n(G,V):= C(G^{n+1},V) :=\{f:G^{n+1}\rightarrow V \text{ continuous}\},$$ with differentials $ d^n:C^{n}(G,V)\rightarrow C^{n+1}(G,V) $, defined by $$
	d^nf(g_0,\dots,g_{n+1})=\sum_{i=0}^{n+1}(-1)^i f(g_0,\dots , \widehat{g_i},\dots, g_{n+1}).$$
	
	We endow the spaces $ C^n(G,V) $ with the compact-open topology. More explicitly, a basis of the compact-open topology is given by finite intersections of sets of the form $ \Omega(K,U)=\{f\in C^n(G,V)\vert\ f(K)\subset U\} $, for compact subsets $ K\subset G^{n+1} $ and open subsets $ U\subset V $. By defining a $ G $-action via $ (gf)(g_0,\dots,g_n)=gf(g^{-1}g_0,\dots ,g^{-1}g_n) $, for $ g,g_0,\dots,g_n\in G $ and $ f\in C^n(G,V) $, these will define topological $ G $-modules. Furthermore, there is a continuous $ G $-equivariant injection $ V\hookrightarrow C(G,V) $, defined by $ v\mapsto [g\mapsto v] $.\\
	
	For a closed subgroup $ H\le G $ and a topological $ G $-module $ V $, define the \textit{induced representation} to be $$ \Ind_H^GV:=\{f\in C(G,V) \mid f(hg)=hf(g)\ \forall h\in H,\ \forall g\in G\} ,$$ 
	with the subspace topology induced from $ \Ind^G_HV\subseteq C(G,V) $ and $ G $-action $ gf(g'):=f(g'g) $, for $ g,g'\in G $. Similarly, the \textit{compact induction} is defined as $$ \cind_H^GV:=\{f\in \Ind_H^G V\mid  \text{ the support of } f \text{ is compact modulo }H\}  $$ with the same $ G $-action.
	
	\begin{lem} \label{Cn=C0}
		For a topological $ G $-module $ V $, there are homeomorphisms of $ G$-modules \begin{equation*}
		C^{n+1}(G,V) \cong C^0(G,C^n(G,V)) \text{ and } C^0(G,V)\cong \Ind^G_1V,
		\end{equation*}
		for all $ n\ge 0 $.
	\end{lem}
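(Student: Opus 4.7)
The plan is to handle the two homeomorphisms separately: the first by appealing to the exponential law for the compact-open topology, and the second by writing down an explicit intertwiner.

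\textbf{First isomorphism.} I would define the curry map
\[
\alpha \colon C^{n+1}(G,V) \longrightarrow C^0(G, C^n(G,V)), \qquad \alpha(f)(g_0)(g_1, \ldots, g_{n+1}) := f(g_0, g_1, \ldots, g_{n+1}).
\]
Since $G$ is locally profinite, $G^{n+1}$ is locally compact Hausdorff, so the exponential law $C(X \times Y, Z) \cong C(X, C(Y, Z))$ applies with $X = G$, $Y = G^{n+1}$, $Z = V$, yielding that $\alpha$ is a well-defined bijection and a homeomorphism for the compact-open topologies. $G$-equivariance is a direct check: both $\alpha(gf)(g_0)(g_1, \ldots, g_{n+1})$ and $(g\alpha(f))(g_0)(g_1, \ldots, g_{n+1})$ unwind to $g f(g^{-1}g_0, g^{-1}g_1, \ldots, g^{-1}g_{n+1})$.

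\textbf{Second isomorphism.} When $H = 1$, the underlying space of $\Ind_1^G V$ is just $C(G,V)$, but it carries the right regular action $(gF)(g') = F(g'g)$, while $C^0(G,V) = C(G,V)$ carries the twisted left action $(gf)(g_0) = g \cdot f(g^{-1}g_0)$. These two $G$-module structures are intertwined by
\[
\Phi \colon C^0(G,V) \longrightarrow \Ind_1^G V, \qquad \Phi(f)(g) := g \cdot f(g^{-1}),
\]
with inverse $\Psi(F)(g_0) := g_0 \cdot F(g_0^{-1})$. $G$-equivariance is a short computation, as both $\Phi(gf)(g')$ and $(g\Phi(f))(g')$ equal $g' g \cdot f(g^{-1}g'^{-1})$. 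Continuity of $\Phi(f)$ as a map $G \to V$ follows from continuity of $f$, of inversion in $G$, and of the action map $G \times V \to V$. To see continuity of $\Phi$ as a map of topological spaces, I would use the exponential law in the reverse direction: the map $C^0(G,V) \times G \to V$, $(f, g) \mapsto g \cdot f(g^{-1})$, factors as evaluation $C(G,V) \times G \to V$ (continuous because $G$ is locally compact Hausdorff) composed with inversion and the action, hence is continuous; its adjoint under the exponential law is exactly $\Phi$, so $\Phi$ is continuous. The same argument applied to $\Psi$ gives continuity of the inverse.

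The main subtle point is the second part: one might naively expect $C^0(G,V)$ and $\Ind_1^G V$ to be the identity-intertwined, but the $G$-actions differ (twisted-left vs.\ right-regular), so the nontrivial twist $f \mapsto [g \mapsto g \cdot f(g^{-1})]$ is required. Everything else is a routine application of the exponential law for the compact-open topology, valid because $G$ (and hence each $G^m$) is locally compact Hausdorff.
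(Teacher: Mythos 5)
Your proof is correct and takes essentially the same route as the paper: the first homeomorphism via the exponential law (which the paper cites from Bourbaki X.3.4) with an easy equivariance check, and the second via the explicit involution $f \mapsto [g \mapsto g\cdot f(g^{-1})]$, exactly the map the paper uses. You supply more detail on continuity of $\Phi$ — re-running the exponential law to reduce to continuity of evaluation, inversion, and the action — where the paper simply asserts "one can check"; that extra care is fine and arguably makes the argument more self-contained.
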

	
	\begin{proof}
		By \cite[X.3.4 Corollaire 2]{Bourbaki}, the map \begin{align*}
		\phi:	C^{n+1}(G,V) &\rightarrow C^0(G,C^n(G,V)) \\
		f&\mapsto \phi(f),
		\end{align*}
		where $ \phi(f)(g_0)(g_1,\dots,g_n)=f(g_0,\dots,g_n) $, is a homeomorphism. It is straightforward to check that this is $ G $-equivariant, making it an isomorphism of topological $ G $-modules.
		
		The assignment $ f\mapsto  [g\mapsto gf(g^{-1})]$ defines both maps $ C^0(G,V)\rightarrow \Ind^G_1V  $ and its inverse. One can check that both maps are continuous and $ G $-equivariant, hence the modules are homeomorphic.
		%
	\end{proof}
	
	\begin{lem}\label{Frobenius}
		For two topological $ G $-modules $ V $ and $ W $, one has the following isomorphism: \[\Hom_G^{\cts}(V,\Ind^G_1W)\cong \Hom^{\cts}(V,W), \]
		where $ \Hom^{\cts}(V,W) $ are continuous group homomorphisms and on the left hand side, we take continuous $ G $-equivariant group homomorphisms.
	\end{lem}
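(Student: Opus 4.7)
The plan is to produce mutually inverse continuous maps by the standard Frobenius recipe: evaluate at the identity in one direction, and send a morphism to its orbit map in the other.

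First I would define
\[ \Phi\colon \Hom_G^{\cts}(V, \Ind_1^G W) \longrightarrow \Hom^{\cts}(V, W), \qquad \varphi \longmapsto \bigl[v \mapsto \varphi(v)(1_G)\bigr]. \]
Since $G$ is locally compact Hausdorff and $\Ind_1^G W \subseteq C(G,W)$ carries the compact-open topology, evaluation at $1_G$ is continuous, so $\Phi(\varphi)$ is a continuous group homomorphism.

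For the reverse direction I would set
\[ \Psi\colon \Hom^{\cts}(V, W) \longrightarrow \Hom_G^{\cts}(V, \Ind_1^G W), \qquad \psi \longmapsto \bigl[v \mapsto [g \mapsto \psi(gv)]\bigr]. \]
Three verifications are then needed. First, $\Psi(\psi)(v)\colon G \to W$ is continuous because it factors as $g \mapsto gv \mapsto \psi(gv)$, using continuity of the orbit map together with $\psi$. Second, $\Psi(\psi)$ is $G$-equivariant: using the action $(g_0 f)(g) = f(gg_0)$ on $\Ind_1^G W$, one computes $\Psi(\psi)(g_0 v)(g) = \psi(gg_0 v) = \Psi(\psi)(v)(gg_0) = (g_0 \Psi(\psi)(v))(g)$. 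Third, and this is the main technical point, $\Psi(\psi)$ itself must be continuous from $V$ to $\Ind_1^G W$.

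For continuity of $\Psi(\psi)$, I would invoke the exponential law \cite[X.3.4 Corollaire 2]{Bourbaki} already used in Lemma \ref{Cn=C0}: a map $V \to C(G,W)$ is continuous iff the adjoint map $G \times V \to W$ is continuous. Here the adjoint is $(g,v) \mapsto \psi(gv)$, which is the composition of the continuous action $G \times V \to V$ with the continuous $\psi$. One also observes that $\Ind_1^G W = C(G,W)$ as a topological space (the condition $f(hg) = hf(g)$ for $h = 1$ is vacuous), so the map lands in the correct subspace. Finally, that $\Phi$ and $\Psi$ are mutually inverse is a direct calculation: $\Phi(\Psi(\psi))(v) = \psi(1_G \cdot v) = \psi(v)$, and, using $G$-equivariance of $\varphi$ together with $(g\varphi(v))(1_G) = \varphi(v)(g)$, one gets $\Psi(\Phi(\varphi))(v)(g) = \Phi(\varphi)(gv) = \varphi(gv)(1_G) = (g\varphi(v))(1_G) = \varphi(v)(g)$. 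The only real obstacle in the whole argument is invoking the exponential law correctly; once that is in place, everything else is bookkeeping.
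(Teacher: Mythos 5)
Your proof is correct. The paper itself does not give an argument but simply cites Casselman--Wigner (Lemma 2 of \cite{CW}); you have reconstructed the standard Frobenius reciprocity proof that lies behind that citation. The key points all check out: evaluation at the identity is continuous because $\Omega(\{1_G\},U)$ is a basic compact-open set; the adjunction $\psi \mapsto [v \mapsto [g \mapsto \psi(gv)]]$ is $G$-equivariant for the right-translation action $(g_0 f)(g) = f(gg_0)$ on $\Ind_1^G W$; and continuity of $\Psi(\psi)$ follows from the exponential law since the adjoint $(v,g)\mapsto \psi(gv)$ is a composition of the action map and $\psi$. One small remark: for the direction you actually use (continuity of the adjoint $V\times G\to W$ implies continuity of the curried map $V\to C(G,W)$), local compactness of $G$ is not even required --- it is only the converse direction of the exponential law that needs $G$ locally compact Hausdorff. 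Otherwise the argument is complete and matches what the cited reference would supply.
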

	
	\begin{proof}
		\cite[Lemma 2]{CW}.
	\end{proof}
	
	\begin{lem}
		The complex $ 0\rightarrow V\rightarrow C^\bullet(G,V)  $ is an exact complex of $ G $-modules.
	\end{lem}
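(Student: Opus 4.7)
The plan is to exhibit an explicit contracting homotopy for the augmented complex, which forces exactness at each spot. The maps in the complex ($v\mapsto [g\mapsto v]$ and the $d^n$) are visibly $G$-equivariant by the construction of the action on the $C^n(G,V)$, so the only content is exactness as a complex of abelian groups.

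The key step is to fix the identity element $e\in G$ and define, for each $n\ge 0$, a continuous map
\[ h^n : C^n(G,V) \longrightarrow C^{n-1}(G,V),\qquad (h^nf)(g_0,\dots,g_{n-1}) := f(e,g_0,\dots,g_{n-1}), \]
with the convention $C^{-1}(G,V):=V$, so that $h^0f = f(e)\in V$. These $h^n$ are \emph{not} $G$-equivariant, but this is not needed: exactness is a statement about the underlying complex of abelian groups.

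The second step is to check the chain-homotopy identity $d^{n-1}\circ h^n + h^{n+1}\circ d^n = \mathrm{id}_{C^n(G,V)}$ for $n\ge 0$ (with $d^{-1}$ denoting the augmentation). This is a direct computation: evaluating at $(g_0,\dots,g_n)$, the term $i=0$ in the expansion of $h^{n+1}d^nf$ produces $f(g_0,\dots,g_n)$, while the remaining terms cancel those coming from $d^{n-1}h^nf$ because the sign on a summand with $g_i$ omitted from $h^n$ flips when one instead omits the inserted $e$ from $h^{n+1}d^n$. For the bottom of the complex one similarly checks that $h^1\circ d^0 + (\text{augmentation})\circ h^0 = \mathrm{id}_{C^0(G,V)}$ and that $h^0$ splits the augmentation $V\hookrightarrow C^0(G,V)$.

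From the contracting homotopy it follows at once that the augmented complex is acyclic in every degree, hence exact. I do not expect any serious obstacle: the homotopy is the standard one for the homogeneous bar resolution and the only thing to confirm is that $h^n$ lands in continuous cochains, which is immediate because $f\circ(\mathrm{const}_e\times\mathrm{id}_{G^n})$ is continuous whenever $f$ is. The mild subtlety worth flagging is that $h^\bullet$ need not be $G$-equivariant, so the complex is not split in the category of $G$-modules; but this is harmless for the claim, which only asserts exactness of the underlying sequence of abelian groups together with $G$-equivariance of the maps $V\hookrightarrow C^0(G,V)$ and $d^n$.
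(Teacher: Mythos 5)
Your proof is correct and is the standard contracting-homotopy argument; the paper defers to \cite[Lemma 2.3]{fust2021}, and the surrounding framework (Casselman--Wigner's notion of a \emph{strong} resolution, cf.\ the remark after the lemma) is exactly the statement that the maps $h^n$ you define give a continuous, but not $G$-equivariant, contracting homotopy. So you are reproducing essentially the same proof. One small stylistic point: the sentence explaining the cancellation is a bit garbled --- the clean statement is that for $1\le i\le n+1$, the $i$-th summand of $(h^{n+1}d^nf)(g_0,\dots,g_n)=d^nf(e,g_0,\dots,g_n)$ is $(-1)^i f(e,g_0,\dots,\widehat{g_{i-1}},\dots,g_n)$, which after reindexing $j=i-1$ carries sign $(-1)^{j+1}$ and so cancels the $j$-th term $(-1)^j f(e,g_0,\dots,\widehat{g_j},\dots,g_n)$ of $(d^{n-1}h^nf)(g_0,\dots,g_n)$, while the $i=0$ term yields $f(g_0,\dots,g_n)$; you may want to spell this out rather than gesture at it.
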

	
	\begin{proof}
		\cite[Lemma 2.3]{fust2021}
	\end{proof}
	
	\begin{defi}
		The $ i^\text{th} $ \textit{continuous group cohomology group} of $ G $ with coefficients in $ V $ is defined to be $ H^i(G,V):= H^i(C^\bullet(G,V)^G) $.
	\end{defi} 
	
	A homomorphism of topological $ G $-modules $ \phi:V\rightarrow W $ is said to be a \emph{strong morphism}, if the induced morphisms $ \Ker \phi\rightarrow V $ and $ V/\Ker\phi\rightarrow W $ each have a continuous left inverse in the category of topological abelian groups. And if one has a short exact sequence of topological $ G $-modules \[ 0\rightarrow V\rightarrow W \rightarrow U\rightarrow 0, \] in which all the morphisms are strong, then the induced sequence \[ 0\rightarrow C^n(G,V)\rightarrow C^n(G,W)\rightarrow C^n(G,U)\rightarrow 0  \] is again strong and then induces a long exact sequence in cohomology. 
	
	For example, any short exact sequence\[ 0\rightarrow V\rightarrow W \rightarrow U\rightarrow 0, \]of topological $ G $-modules is automatically strong if $ U $ is discrete. Therefore it  induces a long exact sequence in cohomology. 
	
	\begin{defi}
		We say that a topological $ G $-module $ V $ is \textit{acyclic} (for the continuous group cohomology), if  \[ H^i(G,V)=\begin{cases}
		V^G, \text{ if } i=0\\
		0,\text{ otherwise}.
		\end{cases}  \]
	\end{defi}
	
	\begin{rem}
		\begin{enumerate}
			\item The $ G $-modules $ C^n(G,V) $ are acyclic for the continuous group cohomology. Hence, the complex $ 0\rightarrow V\rightarrow C^\bullet(G,V) $ is an acyclic resolution of $ V $. (cf. \cite[p. 201]{CW}.)
			\item A topological $ G $-module $ V $ is said to be \emph{continuously injective}, if for every strong $ G $-injection $ U\hookrightarrow W $ and for every morphism of topological $ G $-modules $ \phi:U\rightarrow V $, the morphism $ \phi $ extends to a morphism from $ W $ to $ V $.
			An example of continuously injective $ G $-modules are the modules $ C^n(G,V) $ (cf. \cite[p. 201]{CW}).
		\end{enumerate}
	\end{rem}

	\begin{lem}\label{computes H(K)}
		For an open subgroup $ K\le G $ and a topological $ G $-module $ V $, the $ G $-modules $
		C^n(G,V) $ are acyclic for the continuous group cohomology $ H^\bullet(K,-) $. 
		
		Moreover, the cohomology of the complex $ C^\bullet(G,V)^K $ is $ H^\bullet(K,V) $.
	\end{lem}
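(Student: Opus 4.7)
The plan is to first reduce the $K$-acyclicity of each $C^n(G,V)$ to the case $n=0$, then establish $K$-acyclicity of $\Ind_1^G W$ for any topological abelian group $W$ via a decomposition along the cosets in $G/K$, and finally deduce the ``moreover'' statement from the standard homological algebra of the strong acyclic resolution $0 \to V \to C^\bullet(G,V)$.

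By iterating the $G$-equivariant (hence $K$-equivariant) isomorphism $C^{n+1}(G,V) \cong C^0(G, C^n(G,V)) \cong \Ind_1^G C^n(G,V)$ from Lemma \ref{Cn=C0}, it suffices to show that $\Ind_1^G W = C(G,W)$ is $K$-acyclic for any topological abelian group $W$. Since $K$ is open in $G$, the quotient $G/K$ is discrete, and a choice of representatives $T \subset G$ yields a topological disjoint-union decomposition $G = \bigsqcup_{t \in T} tK$, each $tK \cong K$ via $k \mapsto tk$. Evaluating along these cosets defines a homeomorphism $\Phi \colon C(G,W) \xrightarrow{\sim} \prod_{t \in T} C(K,W)$, $\Phi(f)_t(k) = f(tk)$; the direct computation $\Phi(k_0 f)_t(k) = f(t k k_0) = \Phi(f)_t(k k_0)$ shows that $\Phi$ is $K$-equivariant when the right-hand side carries the factorwise $K$-action induced by the $\Ind_1^K W$-structure on each factor. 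Since continuous maps into a product decompose componentwise and cohomology commutes with products of complexes of abelian groups, $H^i(K, \Ind_1^G W) \cong \prod_{t \in T} H^i(K, \Ind_1^K W)$, and each factor vanishes for $i > 0$ because $\Ind_1^K W \cong C^0(K,W)$ is $K$-acyclic by the preceding remark.

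For the ``moreover'' part, the resolution $0 \to V \to C^\bullet(G,V)$ established in \cite[Lemma 2.3]{fust2021} is in fact strongly exact, as the contracting homotopy used there is a family of continuous maps of topological abelian groups and therefore remains a splitting after restriction to $K$. Splitting it into strong short exact sequences $0 \to Z^n \to C^n(G,V) \to Z^{n+1} \to 0$ and applying the long exact sequence for $H^\bullet(K,-)$, the $K$-acyclicity of each $C^n(G,V)$ together with a standard dimension-shifting argument yields $H^i(K,V) \cong H^i(C^\bullet(G,V)^K)$ for all $i \ge 0$.

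The step I expect to require the most care is the interchange of $H^\bullet(K,-)$ with the potentially large product $\prod_{t \in G/K}$; however, this is not an additional hypothesis to verify but reduces to two elementary facts: continuous functions into a product of topological spaces decompose as a product of continuous functions into each factor, and products are exact in the category of abelian groups, so cohomology of a product of cochain complexes coincides with the product of their cohomologies.
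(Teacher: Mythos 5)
Your proof is correct, and it follows the same essential outline as the paper: reduce to $n=0$ via Lemma~\ref{Cn=C0}, exploit the coset decomposition of $G$ along the open subgroup $K$ to establish acyclicity, and then pass from the strong acyclic resolution to the cohomology of $K$. The one technical variation is in how the coset decomposition is packaged. You write $C(G,W)\cong\prod_{t\in T}C(K,W)$ and then invoke two facts -- that continuous cochains into a product split as a product, and that cohomology commutes with products of cochain complexes -- to conclude $H^i(K,C(G,W))\cong\prod_t H^i(K,C(K,W))=0$ for $i>0$. The paper instead uses the $K$-invariant homeomorphism $K\times G/K\cong G$ and the exponential law to write $C(G,V)\cong C(K,C(G/K,V))$, realizing $C(G,V)$ directly as a module of the form $C^0(K,W')$ with $W'=C(G/K,V)$, which is $K$-acyclic by the standing remark with no commutation-with-products step needed. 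Since $C(G/K,V)=\prod_{t}V$ when $G/K$ is discrete, these are the same decomposition viewed from two angles; the paper's is slightly more economical, your version makes the coset-by-coset structure and the $K$-equivariance computation completely explicit. For the ``moreover'' part, the paper simply cites the general principle that a strong resolution by $H^\bullet(K,-)$-acyclics computes $H^\bullet(K,-)$ (\cite[Proposition~1]{CW}), whereas you reprove that principle in this case by splitting into strong short exact sequences and dimension shifting; this is correct and self-contained, though longer. All the steps you flag as delicate (the homeomorphism $C(G,W)\cong\prod_t C(K,W)$, which relies on compact subsets of $G$ meeting only finitely many $K$-cosets; the $K$-equivariance; and the commutation of cohomology with products) check out.
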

	
	\begin{proof}
		A proof is given in \cite[Proposition 4 (a)]{CW}. The idea is as follows: since the quotient $ G/K $ is discrete, one has a continuous section $ s:G/K\rightarrow G $ of the natural projection $ G\rightarrow G/K $ with which one can define the $ K $-invariant homeomorphism $ K\times G/K\rightarrow G,\ (h,gK)\mapsto s(gK)h $. Therefore, one gets an isomorphism of $ K $-modules $$ C(G,V)\cong C(K\times G/K,V)\cong C(K,C(G/K,V)) $$ showing that $ C(G,V) $ is acyclic as a $ K $-module.
		
		To conclude that the complex $ C^\bullet(G,V)^K $ computes the cohomology of $ K $, one also has to use that the resolution $ V\hookrightarrow C^\bullet(G,V) $ is a strong resolution of acyclic $ K $-modules and hence can be used to compute the cohomology of $ K $ (\cite[Proposition 1]{CW}). 
	\end{proof}
	
	\section{Locally analytic vectors of admissible Banach space representations} \label{sec: locally analytic shit}

	We fix the following notation: let $ G $ be a $ p $-adic reductive group and $ L $ be a finite field extension of $ \Q_p $. Since $ G $ is a $ p $-adic reductive group, it is in particular a locally $ \Q_p $-analytic group. We want to study the continuous cohomology groups of $ G $ with coefficients in the representation of locally analytic vectors in an admissible Banach space representation. For this, we use a spectral sequence, which we will first construct in a more general setting.
	
	\subsection{Construction of a spectral sequence} \label{subsec spectral sequ}

In order to construct the spectral sequence, we use a resolution of the trivial representation in terms of certain compactly induced representations from stabilizer subgroups of vertices in the Bruhat--Tits building. Following the notations of \cite{schneider1997representation}, we denote by $ X $ the reduced Bruhat--Tits building associated to $ G $ and for any $ q\ge 0 $ we denote by $ X_q $ the set of all $ q $-dimensional facets in $ X $. Moreover, for a $ q $-facet $ F\in X_{q} $, we define $ P_F^\dagger $ to be the $ G $-stabilizer of $ F $. In \cite{schneider1997representation}, the authors use the fact that the Bruhat--Tits tree is a contractible space to construct a $ G $-equivariant resolution of $ \Z $. We use a reformulation of this resolution as described in Section 6.4 of \cite{fust2021}: 

\begin{equation}\label{resolution of 1}
\dots\rightarrow\bigoplus_{F\in R_1}\cind^G_{P_F^\dagger}\delta_F \rightarrow\bigoplus_{F\in R_0}\cind^G_{P_F^\dagger}\delta_F\rightarrow \Z \rightarrow 0.
\end{equation}

In this section, we use the resolution \eqref{resolution of 1} to construct a spectral sequence of the form
\begin{equation}\label{spectral sequence general}
E_1^{i,j}=\bigoplus_{F\in R_i}H^j(P_F^\dagger, V\otimes_L \delta_F\inv)\Rightarrow H^{i+j}(G,V),
\end{equation}
where $ \delta_F $ are characters on $ P_F $, taking values in $ \{\pm 1\} $.
 
%
	
	A priori, the complex \eqref{resolution of 1} is a resolution by smooth $ G $-representations on free $ \Z $-modules, but by tensoring with $ L $, we obtain a resolution of the trivial representation by smooth $ G $-representations on $ L $-vector spaces. \[ \bigoplus_{F\in R_\bullet}\cind_{P_{F}^\dagger}^G\delta_F\rightarrow \1\rightarrow 0. \]
	
	By abuse of notation, we now denote by $ \delta_F $ the representation \[ \delta_F:P_F^\dagger\rightarrow\{\pm 1\}\subset L^\times \] on the one-dimensional $L  $-vector space. 
	
	Since the compactly induced representations in this resolution are smooth, we can give them the structure of topological $ G $-modules by equipping them with the discrete topology.
	
	Now consider the double complex \begin{equation}\label{double complex general}
			C^{i,j}:=\Hom_{G}(\bigoplus_{F\in R_i}\cind_{P_{F}^\dagger}^G\delta_F,C^j(G,V)),\ i,j\ge 0.
	\end{equation}
	
		Note that since we equipped the compactly induced representations with the discrete topology, all the homomorphisms in $ C^{i,j} $ are automatically continuous and we can also write it as $ C^{i,j}=\Hom^{\cts}_{G}(\bigoplus_{F\in R_i}\cind_{P_{F}^\dagger}^G\delta_F,C^j(G,V)). $
		
	This induces two spectral sequences with the same limit term. One is coming from a horizontal filtration and the other from the vertical one. To write these down explicitly, we need to understand the cohomology of the rows and of the columns of our double complex \eqref{double complex general}. We start by studying the rows:
	
	Fixing an $ i\ge 0$, we can write \begin{equation}\label{eq Cî=C^0}
		 C^i(G,V)\cong C^0(G,C^{i-1}(G,V))\cong \Ind_1^GC^{i-1}(G,V)
	\end{equation} (cf. Lemma \ref{Cn=C0}), with the convention that $ C^{-1}(G,V)=V $. In particular, we have isomorphisms $ C^i(G,V)^G\cong (\Ind_1^GC^{i-1}(G,V))^G\cong C^{i-1}(G,V) $. The isomorphism \eqref{eq Cî=C^0} allows us then to apply Frobenius reciprocity for topological $ G $-modules (cf. Lemma \ref{Frobenius}). We obtain \begin{align*}
	C^{\bullet,i} &= \Hom^{\cts}_{G}(\bigoplus_{F\in R_\bullet}\cind_{P_{F}^\dagger}^G\delta_F,C^i(G,V))\\
		&\cong \Hom^{\cts}_{L}(\bigoplus_{F\in R_\bullet}\cind_{P_{F}^\dagger}^G\delta_F,C^{i-1}(G,V))\\
		&= \Hom_{L}(\bigoplus_{F\in R_\bullet}\cind_{P_{F}^\dagger}^G\delta_F,C^{i-1}(G,V)).
	\end{align*}
	But now, the complex \[ 0\rightarrow \Hom_{L}(L,C^{i-1}(G,V))\rightarrow \Hom_{L}(\bigoplus_{F\in R_0}\cind_{P_{F}^\dagger}^G\delta_F,C^{i-1}(G,V)) \rightarrow \dots\]is exact, as we have taken $ L $-linear homomorphisms of an exact sequence into the $ L $-vector space $ C^{i-1}(G,V) $. Therefore, taking cohomology of the rows $ C^{\bullet,i} $ gives \begin{equation*}
		H^j(C^{\bullet,i})=\begin{cases}
		0,\text{ if }j\neq 0,\\
		\Hom_L(L,C^{i-1}(G,V))\cong C^{i-1}(G,V)\cong C^i(G,V)^G \text{, if }j=0.
		\end{cases}
	\end{equation*}
	The first page terms of the spectral sequence coming from the horizontal filtration are precisely given by these cohomology groups: \[ E_{h,1}^{i,j}=H^j(C^{\bullet,i}). \]
	Moreover, the first page differentials $ d_{h,1}^{i,j}:E_{h,1}^{i,j}\rightarrow  E_{h,1}^{i+1,j}$ are zero whenever $ j $ is non-zero and for $ j=0 $, they are just the maps\[ d_{h,1}^{i,0}:C^i(G,V)^G\rightarrow C^{i+1}(G,V)^G, \] induced by the differential maps on the complex $ C^\bullet(G,V) $. We can easily compute the second page terms: \[ E_{h,2}^{i,j}=H^{i,j}(E_{h,1}^{\bullet, \bullet})=\begin{cases}
	0\text{, if }j\neq 0 \\
	H^i(C^\bullet(G,V)^G)=H^i(G,V)\text{, if } j=0.
	\end{cases} \]
	In conclusion, we know that the spectral sequence coming from the horizontal filtration converges to the continuous cohomology groups of $ G $ with coefficients in $ V $. The spectral sequence induced by the vertical filtration also converges to $ H^\bullet(G,V) $, since it has the same limit term.
	
	On the other hand, we can fix an $ i\ge 0 $ and consider the columns of the double complex  \[ C^{i,\bullet} =\Hom_{G}(\bigoplus_{F\in R_i}\cind_{P_{F}^\dagger}^G\delta_F,C^\bullet(G,V)).  \]By pulling out the direct sum and applying Frobenius reciprocity for the compact induction, we obtain \begin{align}
	C^{i,\bullet}\cong &\bigoplus_{F\in R_i}\Hom_{P_F^\dagger}(\delta_F,C^\bullet(G,V))\notag\\
	\cong&\bigoplus_{F\in R_i}\Hom_{P_F^\dagger}(\1,C^\bullet(G,V)\otimes_L  \delta_F^{-1} ) . \label{column of double complex}
	\end{align}

	\begin{lem} \label{C(G,V) twisted}
		Let $ V $ be a topological $ G $-module on an $ L $-vector space, let $ H\le G $ be an open subgroup and $ \delta:H\rightarrow L^\times $ a smooth character on $ H $. We have an $ L $-linear isomorphism of topological $ H $-modules \[ C^n(G,V)\otimes_{L}\delta\overset{\cong}{\longrightarrow}C^n(G,V\otimes_{L}\delta),\ \forall n\ge 0. \]
	\end{lem}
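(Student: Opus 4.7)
Since $\delta$ is a one-dimensional $L$-vector space, I would start by fixing a basis vector $v_\delta$ so that $\delta = L\cdot v_\delta$. This gives $L$-linear isomorphisms $V \otimes_L \delta \cong V$ and $C^n(G,V) \otimes_L \delta \cong C^n(G,V)$ (via $w \otimes v_\delta \leftrightarrow w$), which are homeomorphisms when we put the tensor product topology induced by the finite-dimensional factor $\delta$. The map I would use is
\[
\Phi: C^n(G,V) \otimes_L \delta \longrightarrow C^n(G, V \otimes_L \delta),\quad f \otimes v_\delta \longmapsto \bigl[(g_0,\dots,g_n) \mapsto f(g_0,\dots,g_n) \otimes v_\delta\bigr],
\]
extended $L$-linearly. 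It is clearly $L$-linear and, via the above identifications, corresponds to the identity map $C^n(G,V) \to C^n(G,V)$; this already forces $\Phi$ to be an $L$-linear homeomorphism with obvious inverse (a function $\tilde f$ into $V \otimes_L \delta = V \cdot v_\delta$ can be uniquely written as $\tilde f(g_0,\dots,g_n) = f(g_0,\dots,g_n) \otimes v_\delta$ for a continuous $f:G^{n+1}\to V$).

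The only genuine content is checking that $\Phi$ is $H$-equivariant, where $H$ acts diagonally on the tensor products. On the left, for $h \in H$,
\[
h \cdot (f \otimes v_\delta) = (hf) \otimes (\delta(h) v_\delta),
\]
which $\Phi$ sends to $(g_0,\dots,g_n) \mapsto \delta(h)\cdot h f(h^{-1}g_0,\dots,h^{-1}g_n) \otimes v_\delta$. On the right, writing $\tilde f = \Phi(f \otimes v_\delta)$ and using that $h$ acts on $V \otimes_L \delta$ as $h \otimes \delta(h)$, one has
\[
(h\tilde f)(g_0,\dots,g_n) = h\cdot \tilde f(h^{-1}g_0,\dots,h^{-1}g_n) = hf(h^{-1}g_0,\dots,h^{-1}g_n) \otimes \delta(h) v_\delta,
\]
which matches. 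I would also note that the isomorphism does not depend on the choice of $v_\delta$: replacing $v_\delta$ by $\lambda v_\delta$ rescales both sides of $\Phi$ by the same factor.

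The only real obstacle is the pure bookkeeping of keeping the two $\delta(h)$ factors in the right places; once the correct map is written down, continuity and bijectivity are immediate from the identification with $C^n(G,V)$, so the lemma reduces to one line of verification. No additional input from locally convex topology or from the structure of $G$ is needed, since $\delta$ being one-dimensional collapses the tensor product completely.
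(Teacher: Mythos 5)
Your proof is correct, and it takes a genuinely cleaner route than the paper. You work with the ``untwisted'' map $\Phi(f\otimes v_\delta)(\underline{g}) = f(\underline{g})\otimes v_\delta$, and your equivariance computation for the conjugation action $(hf)(\underline{g}) = hf(h^{-1}\underline{g})$ is exactly right: the two $\delta(h)$ factors match on the nose. The paper instead defines $\alpha(f\otimes 1)(g) := f(g)\otimes\widetilde{\delta}(g)$, where $\widetilde{\delta}$ extends $\delta$ to all of $G$ by sending elements outside $H$ to $1$. That pointwise twist by $\widetilde{\delta}(g)$ is what makes the paper's continuity argument nontrivial (it hinges on $\widetilde{\delta}$ being locally constant, which is where openness of $H$ is used). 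Your map has no $g$-dependent twist, so continuity falls out immediately from the homeomorphism $V\otimes_L\delta\cong V$, and indeed your argument never invokes openness of $H$ at all, so it actually proves the slightly stronger statement for an arbitrary subgroup. As a minor side observation worth recording: in the paper's equivariance calculation the step $\widetilde{\delta}(g)\delta(h)=\widetilde{\delta}(gh)$ is used, but with the extension-by-$1$ definition of $\widetilde{\delta}$ this identity fails when $g\notin H$ (the left side is $\delta(h)$, the right side is $1$); your untwisted map avoids this bookkeeping entirely, which is one more point in favour of your version of the proof.
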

	\begin{proof}
		As in Lemma \ref{Cn=C0}, we have homeomorphisms $ C^{n+1}(G,W) \rightarrow C^0(G,C^n(G,W)) $ for every topological space $ W $. These homeomorphisms are $ H $-equivariant if $ W $ is equipped with the structure of a topological $ H $-module. Hence, we may assume that $ n=0 $.
		
		Let $ \widetilde{\delta}:G\rightarrow L^\times$ be the map that sends an element $ g\in G $ to $ \delta(g) $ if $ g $ lies in $ H $ and to $ 1 $ otherwise. Since $ H $ is open in $ G $, this map is locally constant (but not necessarily a character). Therefore the map \[ \alpha: \ C^0(G,V)\otimes_{L}\delta\rightarrow C^0(G,V\otimes_{L}\delta),\] where \begin{equation*}
	 	\alpha(f\otimes 1)(g):=f(g)\otimes \widetilde{\delta}(g)
		\end{equation*}is a well-defined $ L $-linear isomorphism. Moreover, it is $ H $-equivariant, because for any $ h\in H $ and $ g\in G $ we have \begin{align*}
		\alpha(h(f\otimes 1))(g)&=\alpha((hf)\otimes \delta(h))(g)= f(gh)\otimes \widetilde{\delta}(g)\delta(h)\\
		 &=f(gh)\otimes \widetilde{\delta}(gh)=\alpha(f\otimes 1)(gh)=h\alpha(f\otimes 1 )(g).
		\end{align*}
		
		Continuity can be checked on a base of the compact-open topology on the right hand side. Such a base is given by the sets $ \Omega(K,U)=\{f\in C^0(G,V\otimes_{L}\delta) \vert \ f(K)\subset U \} $ for compact subsets $ K\subset G $ and open subsets $ U\subset V\otimes_{L}\delta$. Fixing such a $ K $ and $ U $, we see that $ \alpha^{-1}(\Omega(K,U)) $ consists of functions $ f $ such that for any $ k\in K $, $ \widetilde{\delta}(k)f(k) $ lies in the open subset $ U $. But since $ \widetilde{\delta} $ is locally constant and $ K $ compact, we can cover $ K $ by finitely many open sets of the form $ \widetilde{\delta}^{-1}(\lambda_i) $ for some $ \lambda_i\in L $, $ i=1,\dots,s $. Since the intersections $ K\cap \widetilde{\delta}^{-1} (\lambda_i)$ are again compact, we obtain that $  \alpha^{-1}(\Omega(K,U))$ is the finite intersection \[  \alpha^{-1}(\Omega(K,U))=\bigcap_{i=1}^s\Omega(K_i,\lambda_i^{-1}U)  \] of open sets and therefore is open itself. Thus, $ \alpha $ is continuous.
	\end{proof}
	
	We apply Lemma \ref{C(G,V) twisted} to the character $ \delta_F^{-1} $ in \eqref{column of double complex}, and obtain 	\begin{align*}
	C^{i,\bullet}\cong& \bigoplus_{F\in R_i}\Hom_{P_F^\dagger}(\1,C^\bullet(G,V\otimes_L  \delta_F^{-1})) \\
	\cong & \bigoplus_{F\in R_i}C^\bullet(G,V\otimes_L  \delta_F^{-1})^{P_F^\dagger}.
	\end{align*}
	As we have seen in Lemma \ref{computes H(K)}, the complex $ C^\bullet(G,V) $ is acyclic for the continuous cohomology of $ H $ for any open subgroup $ H\le G $ and hence, it can be used to compute the cohomology groups of $ H $. Therefore, taking the cohomology of this complex gives us the following: \begin{align*}
		H^j(C^{i,\bullet})&=\bigoplus_{F\in R_i}H^j(C^\bullet(G,V\otimes_L  \delta_F^{-1})^{P_F^\dagger})\\
		&=\bigoplus_{F\in R_i}H^j(P_F^\dagger, V\otimes_L\delta_F^{-1}).
	\end{align*}
	
	We thus get a spectral sequence, converging to the continuous cohomology of $ G $ with coefficients in $ V $, where the first page terms are given by the cohomology of the columns $ C^{i,\bullet} $ of our double complex. More precisely, we obtain the spectral sequence \eqref{spectral sequence general}.
	 
	 \subsection{Locally analytic vectors of admissible Banach space representations}
	 
	 The spectral sequence \eqref{spectral sequence general} can be very useful to generalize results that are known for compact groups to arbitrary $ p $-adic reductive groups. In this section, we will show an application of this kind, by comparing the continuous cohomology with coefficients in an admissible Banach space representation to the continuous cohomology with coefficients in its subrepresentation given by locally analytic vectors. We start with a short reminder on the definition of locally analytic representations. For more details on this matter, see for example Section 3 in \cite{emerton-locallyanalytic}. 
	 
	 Let $ V $ be a barrelled locally convex Hausdorff $ L $-vector space, meaning that it is a Hausdorff topological  $ L $-vector space whose topology is defined by a family of seminorms $ q_i:V\rightarrow \mathbb{R}_{\ge 0} $, $ i\in I $, in the sense that the topology on $ V $ is the coarsest topology, making all seminorms continuous maps, and in which every closed lattice is open (for more details, see Chapter 1, Section 6 in \cite{schneider2001nonarchimedean}). For such a space, we denote by $ C^\la(G,V) $ the set of locally analytic $ V $-valued functions on $ G $.
	 
	 \begin{defi}
	 	A \textit{locally analytic representation} of $ G $ over $ L $ is a representation of $ G $ on a barrelled locally convex Hausdorff $ L $-vector space, such that $ G $ is acting on $ V $ via continuous endomorphisms and for each $ v\in V $, the orbit map $ \rho_v:G\rightarrow V $, $ g\mapsto gv $, is a locally analytic function $ \rho_v\in C^\la(G,V) $.
	 \end{defi}
	 
	 Let $ \Pi $ be an admissible $ L $-Banach space representation of $ G $. By definition, $ \Pi $ is a normed $ L $-vector space, making it in particular a locally convex $ L $-vector space, so that it makes sense to talk about the locally analytic vectors in $ \Pi $, meaning the vectors $ v\in \Pi $, whose orbit maps $ \rho_v $ are locally analytic functions. The locally analytic vectors form a locally analytic subrepresentation of $ \Pi $, denoted by $ \Pi^\la $. The subrepresentation $ \Pi^\la $ is equipped with a topology as described in \cite{emerton-locallyanalytic}, making it a topological $ G $-module. The topology on $ \Pi^\la $ is finer than the subspace topology coming from $ \Pi^\la\subset \Pi $, so that the injection $ \Pi^\la\hookrightarrow \Pi $ is a continuous map. An explanation for this is given for example in the beginning of Section 3.5 in \cite{emerton-locallyanalytic}. 
	
	 By Corollary 1.6 in \cite{rodrigues2022solid}, for a compact $ p $-adic Lie group $ K $, we have isomorphisms \[ H^i(K,\Pi^\la)\cong H^i(K,\Pi). \]
	 Note that in \cite{rodrigues2022solid}, the condition for the existence of such isomorphism is that the representation $ \Pi $ has no higher locally analytic vectors, which is true for admissible Banach space representations (cf. Theorem 7.1 \cite{schneider2002algebras}).
	 
	 We want to use the spectral sequence \eqref{spectral sequence general} to generalize this to the following:
	 
	 \begin{prop} \label{prop: adm and la same}
	 	Let $ \Pi $ be an admissible $ L $-Banach space representation of $ G $. Then for every $ i\ge 0 $, the maps\[ H^i(G,\Pi^\la)\overset{\cong}{\longrightarrow} H^i(G,\Pi), \]
	 	induced by the inclusion $ \Pi^\la \hookrightarrow \Pi $, are isomorphisms.
	 \end{prop}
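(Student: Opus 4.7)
The plan is to run the spectral sequence \eqref{spectral sequence general} in parallel for $V = \Pi$ and $V = \Pi^\la$ and compare the two. The inclusion $\iota \colon \Pi^\la \hookrightarrow \Pi$ is a continuous $G$-equivariant map, so by functoriality of the double complex \eqref{double complex general} in $V$, it induces a morphism between the two spectral sequences, compatible with the map $H^\bullet(G, \Pi^\la) \to H^\bullet(G, \Pi)$ on the abutments that is induced by $\iota$. By the standard comparison principle for first-quadrant spectral sequences, it will be enough to show that the induced map on the $E_1$-page is an isomorphism in every bidegree $(i,j)$.

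On the $E_1$-page, the map in question is
\[\bigoplus_{F \in R_i} H^j\bigl(P_F^\dagger,\ \Pi^\la \otimes_L \delta_F^{-1}\bigr) \;\longrightarrow\; \bigoplus_{F \in R_i} H^j\bigl(P_F^\dagger,\ \Pi \otimes_L \delta_F^{-1}\bigr).\]
Since $\delta_F$ takes values in $\{\pm 1\}$ it is a smooth character of finite order, so the twist by $\delta_F^{-1}$ is harmless: it preserves admissibility, and being locally constant it commutes with the formation of locally analytic vectors in the sense that $(\Pi \otimes_L \delta_F^{-1})^\la = \Pi^\la \otimes_L \delta_F^{-1}$ (an orbit map $g \mapsto gv \otimes \delta_F^{-1}(g)$ is locally analytic iff $g \mapsto gv$ is). Hence the question reduces, facet by facet, to the following claim: for the compact $p$-adic Lie group $P_F^\dagger$ and the admissible Banach representation $W := \Pi \otimes_L \delta_F^{-1}$, the inclusion $W^\la \hookrightarrow W$ induces isomorphisms on all continuous cohomology groups.

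This last claim is precisely Corollary~1.6 of \cite{rodrigues2022solid}, applied to each facet stabilizer. Putting the pieces together: iso on $E_1$ yields iso on $E_\infty$, and hence iso on the abutments, giving the proposition.

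The main technical step is to check that the spectral sequence is genuinely functorial in $V$ in the required way --- that is, that the chain of identifications used in Section~\ref{subsec spectral sequ} (Lemma~\ref{Cn=C0}, Frobenius reciprocity via Lemma~\ref{Frobenius}, the character twist of Lemma~\ref{C(G,V) twisted}, and the $P_F^\dagger$-acyclicity of $C^\bullet(G,V)$ from Lemma~\ref{computes H(K)}) is natural in the coefficient module $V$, so that the morphism $E_1^{i,j}(\Pi^\la) \to E_1^{i,j}(\Pi)$ coming from the morphism of double complexes really coincides with the morphism one would write down directly by applying $H^j(P_F^\dagger, - \otimes_L \delta_F^{-1})$ to $\iota$. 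Once this naturality is in hand, the proof is a clean reduction of the reductive-group statement to the compact-group comparison theorem via the Bruhat--Tits spectral sequence.
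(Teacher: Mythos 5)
Your overall strategy matches the paper's --- run the spectral sequence \eqref{spectral sequence general} in parallel for $\Pi$ and $\Pi^\la$, compare $E_1$-pages, and invoke the comparison theorem for spectral sequences. Your observation that the twist by the finite-order smooth character $\delta_F^{-1}$ is harmless (preserves admissibility, commutes with passage to locally analytic vectors) is correct and is used implicitly in the paper as well.

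However, there is a genuine gap in the crucial step. You assert that $P_F^\dagger$ is a \emph{compact} $p$-adic Lie group and apply Corollary~1.6 of \cite{rodrigues2022solid} directly to it. This is false: $P_F^\dagger$ is the full $G$-stabilizer of the facet $F$, which is not compact (for instance, it contains the center of $G$). The paper explicitly flags this: ``recall that the stabilizer groups $P_F^\dagger$ appearing in the sequence are not compact.'' The compact-group result from \cite{rodrigues2022solid} therefore does not apply to $P_F^\dagger$ out of the box, and your reduction stops one step short of where it needs to be.

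The paper closes this gap with Lemma~\ref{lem.\ H(P_F,la)}: one chooses a compact open normal subgroup $R_F \trianglelefteq P_F^\dagger$ (from \cite{schneider1997representation}), establishes a Hochschild--Serre spectral sequence for the open normal inclusion $R_F \le P_F^\dagger$ (Lemma~\ref{lem hochschild-serre}, whose proof itself requires some care since $P_F^\dagger$ is only locally profinite), applies the compact-group comparison of \cite{rodrigues2022solid} to the genuinely compact group $R_F$, and then runs a \emph{second} spectral-sequence comparison to conclude that $H^i(P_F^\dagger, \Pi^\la) \to H^i(P_F^\dagger, \Pi)$ is an isomorphism. Only then does the $E_1$-page comparison in \eqref{spectral sequence general} go through. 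You should insert this intermediate non-compact-to-compact reduction before invoking the result of Rodrigues Jacinto and Rodr\'iguez Camargo.
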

	 
	 Before we can apply the spectral sequence argument, recall that the stabilizer groups $ P_F^\dagger $ appearing in the sequence are not compact. We therefore need the following lemmas first.
	 
	 \begin{lem}\label{lemma cohomology of exact functor}
	 	Let $ \Gamma $ be a discrete group and let $ 0\rightarrow M^0\overset{d^0}{\rightarrow} M^1 \overset{d^1}{\rightarrow} M^2\rightarrow \dots $ be a complex of $ \Gamma $-modules. Then given an exact functor $ F:\MOD_\Gamma\rightarrow \MOD_\Z $, we have isomorphisms \[ H^i(F(M^\bullet))\cong F(H^i(M^\bullet)) \] for every $ i\ge 0 $.
	 \end{lem}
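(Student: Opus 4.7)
The plan is to use the standard fact that any exact functor between abelian categories commutes with cohomology of chain complexes, since exactness forces the functor to preserve kernels, images, and cokernels of any given morphism by decomposing things into short exact sequences.

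Concretely, for each $i \ge 0$ I would introduce the $\Gamma$-submodules $Z^i := \Ker d^i$ and $B^{i+1} := \image d^i$ of $M^{i+1}$, so that by definition $H^i(M^\bullet) = Z^i / B^i$. The differential $d^i$ then factors into two short exact sequences of $\Gamma$-modules:
\[ 0 \longrightarrow Z^i \longrightarrow M^i \longrightarrow B^{i+1} \longrightarrow 0, \qquad 0 \longrightarrow B^i \longrightarrow Z^i \longrightarrow H^i(M^\bullet) \longrightarrow 0. \]

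Next I would apply the exact functor $F$ to both sequences. Since $F$ preserves short exact sequences, the resulting sequences
\[ 0 \to F(Z^i) \to F(M^i) \to F(B^{i+1}) \to 0, \qquad 0 \to F(B^i) \to F(Z^i) \to F(H^i(M^\bullet)) \to 0 \]
remain exact. From the first, $F(Z^i)$ is precisely the kernel of $F(d^i) : F(M^i) \to F(M^{i+1})$ (using that $F$ applied to an inclusion followed by $d^i$ is zero, and that the map $F(M^i) \to F(B^{i+1})$ coincides with $F(d^i)$ up to the inclusion $F(B^{i+1}) \hookrightarrow F(M^{i+1})$, which is injective by exactness of $F$ applied to $0 \to B^{i+1} \to M^{i+1}$). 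Similarly $F(B^i) = \image F(d^{i-1})$. The second sequence then gives
\[ F(H^i(M^\bullet)) \cong F(Z^i)/F(B^i) = \Ker F(d^i) / \image F(d^{i-1}) = H^i(F(M^\bullet)), \]
and naturality of these identifications is automatic from the construction.

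The proof is essentially formal, and there is no real obstacle: the only thing to be careful about is the bookkeeping that identifies $F$ applied to a kernel or image with the kernel or image of the induced map, which follows from exactness of $F$ applied to the short exact sequences $0 \to Z^i \to M^i \to B^{i+1} \to 0$ and $0 \to B^{i+1} \to M^{i+1}$. The discreteness of $\Gamma$ is not used in the argument; it only serves to make $\MOD_\Gamma$ an ordinary abelian category, which is the only input needed.
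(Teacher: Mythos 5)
Your proof is correct and follows essentially the same route as the paper: identify cocycles and coboundaries as $\Gamma$-modules, apply exactness of $F$ to the short exact sequences linking $Z^i$, $M^i$, $B^{i+1}$, and $H^i$, and conclude that $F$ commutes with the passage to cohomology. The paper phrases the coboundary step via the isomorphism $B^i \cong M^{i-1}/Z^{i-1}$ rather than displaying both short exact sequences explicitly, but the content is identical, as is your (correct) observation that discreteness of $\Gamma$ plays no role beyond making $\MOD_\Gamma$ abelian.
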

 
 	\begin{proof}
 		We fix the following notation: for any complex \[ 0\rightarrow C^0\overset{\delta^0}{\rightarrow}C^1 \overset{\delta^1}{\rightarrow}C^2\rightarrow \dots, \]
 		let $ Z^i(C^\bullet):=\ker(\delta^i) $ be the $ i $-cocycles and $ B^i(C^\bullet):=\image(\delta^{i-1})\cong C^{i-1}/Z^{i-1}(C^\bullet)  $ the $ i $-coboundaries. 
 		
 		Applying the exact functor $ F $ to the exact sequence \[ 0\rightarrow Z^i(M^\bullet)\rightarrow M^i\rightarrow M^{i+1} \] gives the isomorphism \begin{equation}\label{eq ZiF=FZi}
 		Z^i(F(M^\bullet))=F(Z^i(M^\bullet)),\ \forall i\ge 0. 
 		\end{equation}
 		
 		Using this identification \eqref{eq ZiF=FZi} and the exactness of $ F $, we can also rewrite the coboundaries as  \begin{equation}\label{eq BiF=FBi}
 		B^i(F(M^\bullet))\cong \frac{F(M^{i-1})}{Z^{i-1}(F(M^\bullet))}\cong \frac{F(M^{i-1})}{F(Z^{i-1}(M^\bullet))}\cong F\left( \frac{M^{i-1}}{Z^{i-1}(M^\bullet)}\right) \cong F(B^i(M^\bullet)) .
 		\end{equation}
 		
 		Combining these isomorphisms \eqref{eq ZiF=FZi} and \eqref{eq BiF=FBi} gives us  \begin{equation*}
 		H^i(F(M^\bullet))\cong \frac{F(Z^i(M^\bullet))}{F(B^i(M^\bullet))}\cong F\left( \frac{Z^i(M^\bullet)}{B^i(M^\bullet)}\right) \cong F(H^i(M^\bullet)),
 		\end{equation*}
 		as claimed.
 	\end{proof}
	 	
	 \begin{lem}\label{lemma discrete group cohomology commutes}
	 	Let $ \Gamma $ be a discrete group, then for every $ i\ge 0 $, the functor $ C^i(\Gamma,-)^\Gamma $ is exact.
	 	
	 	In particular, for any complex $ 0\rightarrow M^0\overset{d^0}{\rightarrow} M^1 \overset{d^1}{\rightarrow} M^2\rightarrow \dots$ of $ \Gamma $-modules, we have \[ H^j(C^i(\Gamma, M^\bullet)^\Gamma)\cong C^i(\Gamma,H^j(M^\bullet))^\Gamma, \ \forall i,j\ge 0. \]
	 \end{lem}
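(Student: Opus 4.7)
My plan is to reduce the exactness claim to the fact that direct products are exact in the category of abelian groups, and then invoke the preceding lemma for the consequence about complexes.

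First I would unwind the definition in the discrete setting: since $\Gamma$ carries the discrete topology, every set-theoretic map $\Gamma^{i+1}\to V$ is automatically continuous, so $C^i(\Gamma,V)$ is just the product $\prod_{\Gamma^{i+1}}V$ with the diagonal $\Gamma$-action $(gf)(g_0,\dots,g_i)=gf(g^{-1}g_0,\dots,g^{-1}g_i)$.

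The main step is to identify $C^i(\Gamma,V)^\Gamma$ in a way that is manifestly natural and exact in $V$. Specializing the invariance condition at $g=g_0$, one sees that any $\Gamma$-invariant $f$ is determined by its restriction $\tilde f(h_1,\dots,h_i):=f(1,h_1,\dots,h_i)$, via the formula $f(g_0,\dots,g_i)=g_0\tilde f(g_0^{-1}g_1,\dots,g_0^{-1}g_i)$. Conversely, every function $\tilde f\colon \Gamma^i\to V$ extends by that same formula to a $\Gamma$-invariant element of $C^i(\Gamma,V)$. This gives a natural isomorphism of functors
\[
 C^i(\Gamma,-)^\Gamma \;\cong\; \textstyle\prod_{\Gamma^i}(-)
\]
from $\MOD_\Gamma$ to $\MOD_\Z$. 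Since arbitrary direct products are exact in the category of abelian groups, the right-hand functor, and hence $C^i(\Gamma,-)^\Gamma$, is exact.

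For the second assertion, I would simply apply Lemma \ref{lemma cohomology of exact functor} to the exact functor $F=C^i(\Gamma,-)^\Gamma$, which yields the natural isomorphism $H^j(C^i(\Gamma,M^\bullet)^\Gamma)\cong C^i(\Gamma,H^j(M^\bullet))^\Gamma$ for all $i,j\ge 0$. The only step that demands any real verification is the naturality of the normalization isomorphism identifying the invariants with the product; everything else is formal.
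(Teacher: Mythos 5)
Your proof is correct, and it takes a genuinely different route from the paper's in the first (and only nontrivial) step. The paper proves exactness of $C^i(\Gamma,-)^\Gamma$ by first noting that $C^i(\Gamma,-)$ is exact on $\Gamma$-modules (since $\Gamma$ is discrete, all maps are continuous), then applying $(-)^\Gamma$ and observing that the obstruction term $H^1(\Gamma,C^i(\Gamma,N))$ in the resulting long exact sequence vanishes because $C^i(\Gamma,N)\cong \Ind_{\{1\}}^\Gamma C^{i-1}(\Gamma,N)$ is acyclic by Shapiro's Lemma. You instead identify the invariant cochains directly with the product $\prod_{\Gamma^i}(-)$ via the standard ``normalization'' isomorphism from homogeneous to inhomogeneous cochains, $f\mapsto \tilde f(h_1,\dots,h_i)=f(1,h_1,\dots,h_i)$, which is visibly natural in $V$, and then appeal to exactness of arbitrary products of abelian groups. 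Your argument is more elementary and self-contained: it avoids the long exact sequence and Shapiro's Lemma entirely and makes the exactness completely transparent. The paper's argument, by contrast, reuses machinery already set up (Lemma \ref{Cn=C0} and Shapiro's Lemma) and so fits more naturally into the development of the section, at the cost of invoking acyclicity for what is ultimately a very concrete fact. Both proofs conclude identically by applying Lemma \ref{lemma cohomology of exact functor}.
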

 
 	\begin{proof}
 		To show that the functor $ C^i(\Gamma,-)^\Gamma $ is exact, let \[ 0\rightarrow N \rightarrow M\rightarrow Q\rightarrow 0 \] be a short exact sequence of $ \Gamma $-modules. Note that, since $ \Gamma $ is discrete, any function on $ \Gamma $ (or on a product $ \Gamma^{i+1} $ of copies of $ \Gamma$) is automatically continuous and hence, the functor $ C^i(\Gamma,-) $ is exact and we obtain a short exact sequence \[  0\rightarrow C^i(\Gamma,N) \rightarrow C^i(\Gamma,M)\rightarrow C^i(\Gamma,Q)\rightarrow 0. \]
 		
 		Applying the functor $ (-)^\Gamma $ to this, induces a long exact sequence in cohomology \[  0\rightarrow C^i(\Gamma,N)^\Gamma \rightarrow C^i(\Gamma,M)^\Gamma\rightarrow C^i(\Gamma,Q)^\Gamma\rightarrow H^1(\Gamma,C^i(\Gamma,N)),\] where the first cohomology group $ H^1(\Gamma,C^i(\Gamma,N))$ vanishes, because $ C^i(\Gamma,N) $ is acyclic, since by Lemma \ref{Cn=C0} and Shapiro's Lemma \cite[Proposition 3]{CW}, we have \[  H^i(\Gamma,C^i(\Gamma,N))\cong H^i(\Gamma,\Ind_{\{1\}}^\Gamma C^{i-1}(\Gamma,N))\cong H^i(\{1\},C^{i-1}(\Gamma,N))=0 \] for $ i>0 $. This proves exactness of $ C^i(\Gamma,-)^\Gamma $.
 		
 		We can thus apply Lemma \ref{lemma cohomology of exact functor} to the functor $ C^i(\Gamma,-)^\Gamma $ and the complex $ M^\bullet $, to obtain $H^j(C^i(\Gamma, M^\bullet)^\Gamma)\cong C^i(\Gamma,H^j(M^\bullet))^\Gamma  $.
 	\end{proof}

	 \begin{lem}\label{lem hochschild-serre}
	 	Let $ V $ be a topological $ G $-module and let $ H\le G $ be an open normal subgroup of $ G $. Then there exists a Hochschild--Serre spectral sequence:\[ E_2^{i,j}\cong H^i(G/H,H^j(H,V))\Rightarrow H^{i+j}(G,V). \]
 	 \end{lem}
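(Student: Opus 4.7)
The plan is to build a first-quadrant double complex whose total cohomology computes $H^\bullet(G,V)$ in two ways, one of which exhibits the Hochschild--Serre $E_2$-page. Since $H$ is normal in $G$, the $G$-action on $C^j(G,V)$ preserves $C^j(G,V)^H$ and descends to an action of $G/H$. Consider
\[ D^{i,j} := C^i\bigl(G/H,\, C^j(G,V)^H\bigr)^{G/H}, \quad i,j \ge 0, \]
with horizontal differential the cochain differential of the discrete group $G/H$ and vertical differential induced from $C^\bullet(G,V)$; the two commute because the differentials on $C^\bullet(G,V)$ are $G$-equivariant.

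For the columns-first spectral sequence, fix $i$ and apply Lemma \ref{lemma discrete group cohomology commutes} to the discrete group $G/H$ and the complex $C^\bullet(G,V)^H$ of $G/H$-modules, obtaining
\[ H^j(D^{i,\bullet}) \cong C^i\bigl(G/H,\, H^j(C^\bullet(G,V)^H)\bigr)^{G/H} \cong C^i\bigl(G/H,\, H^j(H,V)\bigr)^{G/H}, \]
where the second isomorphism is Lemma \ref{computes H(K)}. Taking horizontal cohomology yields $E_2^{i,j} = H^i(G/H, H^j(H,V))$. For the rows-first spectral sequence the plan is to prove that $C^j(G,V)^H$ is $G/H$-acyclic with $(G/H)$-invariants equal to $C^j(G,V)^G$; its $E_2$-page will then be concentrated in $i=0$ with $E_2^{0,j} = H^j(C^\bullet(G,V)^G) = H^j(G,V)$, pinning the total cohomology of $D^{\bullet,\bullet}$ to $H^{i+j}(G,V)$. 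For the acyclicity, Lemma \ref{Cn=C0} supplies a $G$-equivariant isomorphism $C^j(G,V) \cong \Ind_1^G C^{j-1}(G,V)$ (with $C^{-1}(G,V) := V$). A continuous function $F \colon G \to C^{j-1}(G,V)$ lies in $(\Ind_1^G C^{j-1}(G,V))^H$ exactly when $F(gh) = F(g)$ for all $h \in H$, and since $G/H$ is discrete such functions correspond bijectively to arbitrary functions $G/H \to C^{j-1}(G,V)$. This yields a $G/H$-equivariant identification
\[ C^j(G,V)^H \cong C\bigl(G/H,\, C^{j-1}(G,V)\bigr) \]
in which $G/H$ acts purely by translation on the domain; this is the coinduced module from the trivial subgroup of the discrete group $G/H$, hence Shapiro-acyclic (exactly as in the proof of Lemma \ref{lemma discrete group cohomology commutes}). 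Both spectral sequences therefore abut to $H^{i+j}(G,V)$, and the columns-first one is the sought Hochschild--Serre spectral sequence.

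The main technical subtlety I expect is verifying that the $G/H$-action on $H^j(H,V)$ appearing on the $E_2$-page coincides with the standard conjugation action arising from the extension $1 \to H \to G \to G/H \to 1$; this is a routine but slightly fiddly check, obtained by tracking the $G$-action on $C^j(G,V)$ through the chain of identifications above and down to cohomology.
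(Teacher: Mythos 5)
Your proposal is correct and follows essentially the same route as the paper: the same double complex $C^i(G/H, C^j(G,V)^H)^{G/H}$, the same two spectral sequences, the same use of Lemma \ref{lemma discrete group cohomology commutes} and Lemma \ref{computes H(K)} for the $E_2$-page, and a $G/H$-acyclicity argument for $C^j(G,V)^H$ to pin down the abutment. The only cosmetic difference is that you identify $C^j(G,V)^H$ directly as the coinduced $G/H$-module $C(G/H, C^{j-1}(G,V))$ and invoke Shapiro, whereas the paper reaches the same conclusion by showing $\Hom_{G/H}(-, C^j(G,V)^H)$ is exact via Frobenius reciprocity (Lemma \ref{Frobenius}) -- these are two phrasings of the same fact.
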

  
  	\begin{proof}
  		We mimic the proof of Proposition 5 in \cite{CW}, where the authors prove the existence of a Hochschild--Serre spectral sequence for a closed normal subgroup $ H $, assuming some additional conditions that are not needed for the case of an open normal subgroup. We consider the double complex \[ C^{i,j}=C^i(G/H,C^j(G,V)^H)^{G/H},\ i,j\ge 0. \]
  		This double complex induces two spectral sequences with the same limit term. The first spectral sequence comes from the horizontal filtration of the double complex and its $ E_1 $-terms are given by:
  		\begin{align*}
	  		E_{1,h}^{i,j}=H^j(C^{\bullet,i})=H^j(C^\bullet(G/H,C^i(G,V)^H)^{G/H})=H^j(G/H,C^i(G,V)^H)
  		\end{align*}
  		Note that the groups $ C^i(G,V)^H $ are acyclic for the cohomology of $ G/H $. Indeed, since $ G/H $ is discrete, the continuous cohomology of $ G/H $ is just group cohomology. Now for any $ G/H $-module $ W $, we can give it a topological $ G/H $-module structure by equipping it with the discrete topology and obtain \begin{align*}
	  		\Hom_{G/H}(W,C^i(G,V)^H)&=\Hom^{\cts}_{G/H}(W,C^i(G,V)^H)\\&\cong \Hom^{\cts}_G(W,C^i(G,V))\\&\cong \Hom^{\cts}(W,C^{i-1}(G,V))\\&=\Hom(W,C^{i-1}(G,V)),
  		\end{align*}
  		using Lemma \ref{Frobenius}. Since the functor $ \Hom(-,C^{i-1}(G,V)) $ is exact, so is the functor $ \Hom_{G/H}(-,C^i(G,V)^H) $, showing that $ C^i(G,V)^H $ is acyclic for the cohomology of $ G/H $.
  		
  		Hence, we obtain \[  E_{1,h}^{i,j}=H^j(G/H,C^i(G,V)^H)=\begin{cases}
	  		0\text{, if }j\neq 0\\
	  		(C^i(G,V)^H)^{G/H}\cong C^i(G,V)^G\text{, if }j=0.
  		\end{cases}\]
  		And the $ E_2 $-terms are given by \[ E_{2,h}^{i,j}=\begin{cases}
	  		0\text{, if }j\neq 0\\
	  		H^i(C^\bullet(G,V)^G)=H^i(G,V)\text{, if }j=0.
  		\end{cases} \]
  		Therefore, the two spectral sequences of the double complex $ C^{i,j} $ converge to the continuous cohomology of $ G $ with coefficients in $ V $.
  		
  		The $ E_1 $-terms of the spectral sequence coming from the vertical filtration are given by \begin{align*}
  		E_{1,v}^{i,j}=H^j(C^{i,\bullet})=H^j(C^i(G/H,C^\bullet(G,V)^H)^{G/H}).
  		\end{align*}
  		
  		We can apply Lemma \ref{lemma discrete group cohomology commutes} to the discrete group $ \Gamma=G/H $ and the complex $ M^\bullet=C^\bullet(G,V)^H $ and get \[ H^j(C^i(G/H,C^\bullet(G,V)^H)^{G/H})\cong C^i(G/H,H^j(C^\bullet(G,V)^H))^{G/H}. \]
  		
  		By Lemma \ref{computes H(K)}, the resolution $ C^\bullet(G,V) $ can be used to compute the continuous cohomology of $ H $, so that \[ E_{1,v}^{i,j}=C^i(G/H,H^j(C^\bullet(G,V)^H))^{G/H}=C^i(G/H,H^j(H,V))^{G/H}. \]
  		
  		Then the $ E_2 $-terms are given by \[ E_{2,v}^{i,j}=H^{i,j}(E_{1,v}^{\bullet,\bullet})=H^i(C^\bullet(G/H,H^j(H,V))^{G/H}) =H^i(G/H,H^j(H,V)).\]
  		
  		Since we know that this spectral sequence has the same limit term as the one from the horizontal filtration, we obtain the wanted spectral sequence.
  	\end{proof}
 
	 \begin{lem} \label{lem. H(P_F,la)}
	 	Let $ F\in X_q $ be a $ q $-dimensional facet of the Bruhat--Tits building of $ G $. Then for every $ i\ge 0 $, we have isomorphisms \[ H^i(P_F^\dagger,\Pi^\la)\cong H^i(P_F^\dagger,\Pi). \]
	 \end{lem}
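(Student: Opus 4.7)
The plan is to reduce to the compact-group result of Rodrigues Jacinto and Rodríguez Camargo (Corollary 1.6 of \cite{rodrigues2022solid}) via Hochschild--Serre. Within $P_F^\dagger$, the pointwise stabilizer $P_F$ of $F$ is a normal subgroup (it is the kernel of the action of $P_F^\dagger$ on the finite set of vertices of $\overline{F}$) with finite quotient $P_F^\dagger/P_F$. In the semisimple case, $P_F$ is already a compact open normal subgroup. In the general reductive case one chooses instead an open compact normal subgroup $K \trianglelefteq P_F^\dagger$, possibly shrinking inside $P_F$ to accommodate a noncompact center, so that $P_F^\dagger/K$ is a discrete group.

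With this choice of $K$, I apply Lemma \ref{lem hochschild-serre} to both $V = \Pi$ and $V = \Pi^\la$ to obtain Hochschild--Serre spectral sequences
\begin{equation*}
E_2^{i,j}(V) = H^i\!\left(P_F^\dagger/K,\, H^j(K, V)\right) \Longrightarrow H^{i+j}(P_F^\dagger, V).
\end{equation*}
The continuous $P_F^\dagger$-equivariant inclusion $\Pi^\la \hookrightarrow \Pi$ induces, by naturality, a morphism between the two spectral sequences.

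On the $E_2$-page, Corollary 1.6 of \cite{rodrigues2022solid} applies to the compact $p$-adic Lie group $K$ and the admissible Banach space representation $\Pi$ restricted to $K$, giving isomorphisms $H^j(K,\Pi^\la) \cong H^j(K,\Pi)$ for all $j \geq 0$, induced precisely by $\Pi^\la \hookrightarrow \Pi$. These isomorphisms are compatible with the residual $P_F^\dagger/K$-action by functoriality in the coefficient module. Applying the functor $H^i(P_F^\dagger/K, -)$ then yields isomorphisms on each $E_2^{i,j}$, and the standard spectral-sequence comparison theorem produces the desired isomorphism on the abutments.

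The main obstacle is the first step: the construction of a compact open normal subgroup $K$ of $P_F^\dagger$ with discrete quotient when $G$ is reductive but not semisimple, since the center of $G$ acts trivially on the reduced building and is therefore contained in $P_F^\dagger$. Once this structural point is settled, the remainder of the argument is a routine spectral-sequence comparison, justified entirely by lemmas already established in the paper.
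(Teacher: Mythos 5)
Your proof follows exactly the route of the paper's: Hochschild--Serre for a compact open normal subgroup of $P_F^\dagger$, then Corollary 1.6 of \cite{rodrigues2022solid} on the $E_2$-page to obtain isomorphisms there, then the spectral-sequence comparison theorem for the abutments. The point you flag as the main obstacle is not actually a gap but is resolved in the paper by citation: Section 1.2 of \cite{schneider1997representation} provides, for each facet $F$, a subgroup $R_F$ that is compact open in $G$ and normal in $P_F^\dagger$ by construction, independently of whether the center of $G$ is compact. In particular, its normality in $P_F^\dagger$ is built into the definition (it is cut out by root-group filtrations attached to the facet), so one does not need to engineer it by shrinking inside the pointwise stabilizer $P_F$. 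With that reference supplied, your argument coincides with the paper's.
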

 
 	\begin{proof}
 		Let us fix a facet $ F\in X_q $. Recall that there exists a compact open subgroup $ R_F\le G $, which is open and normal in $ P_F^\dagger $ (for a construction, see Section 1.2 in \cite{schneider1997representation}).
 		
 			Since the subgroup $ R_F $ is compact open in $ P_F^\dagger $, we can apply Lemma \ref{lem hochschild-serre}, which proves the existence of a Hochschild--Serre spectral sequence \[ E_2^{i,j}=H^i(P_F^\dagger/R_F, H^j(R_F,\Pi))\Rightarrow H^{i+j}(P_F^\dagger,\Pi). \]
 		 		By the same argument, we also obtain a spectral sequence for the locally analytic subrepresentation \[ \widetilde{E}_2^{i,j}=H^i(P_F^\dagger/R_F, H^j(R_F,\Pi^\la))\Rightarrow H^{i+j }(P_F^\dagger,\Pi^\la). \]
 		 		On the other hand, the inclusion $ \Pi^\la\hookrightarrow \Pi $ induces maps between the limit terms $ H^i(P_F^\dagger,\Pi^\la) \rightarrow H^i(P_F^\dagger,\Pi) $, as well as a map between all second page terms of the spectral sequence \[  \widetilde{E}_2^{i,j}=H^i(P_F^\dagger/R_F, H^j(R_F,\Pi^\la))\rightarrow H^i(P_F^\dagger/R_F, H^j(R_F,\Pi))=E_2^{i,j} .\]
 		 		We thus obtain a map of spectral sequences $   \widetilde{E}_r^{i,j}\rightarrow E_r^{i,j} . $

 		But since $ R_F $ is compact, Corollary 1.6 in \cite{rodrigues2022solid} tells us that these are isomorphisms for $ r=2 $ and for all $ i , j\ge 0 $. By the Comparison Theorem 5.2.12 in \cite{weibel1995introduction}, this implies that also the map of the limit terms \[ H^i(P_F^\dagger,\Pi^\la) \rightarrow H^i(P_F^\dagger,\Pi) \]
 		is an isomorphism.
 	\end{proof}
 
 	We can now prove Proposition \ref{prop: adm and la same}.
 	\begin{proof}[Proof of Proposition \ref{prop: adm and la same}]
 		The argumentation is similar to the one in the proof of Lemma \ref{lem. H(P_F,la)}. By the discussion in Section \ref{subsec spectral sequ}, we obtain spectral sequences like in \eqref{spectral sequence general} for both, $ \Pi $ and $ \Pi^\la $: \begin{align*}
	 		\widetilde{E}_1^{i,j}=&\bigoplus_{F\in R_i}H^j(P_F^\dagger, \Pi^\la\otimes_L \delta_F\inv)\Rightarrow H^{i+j}(G,\Pi^\la),\\
	 		 E_1^{i,j}=&\bigoplus_{F\in R_i}H^j(P_F^\dagger, \Pi\otimes_L \delta_F\inv)\Rightarrow H^{i+j}(G,\Pi).
 		\end{align*}
 		But again, the inclusion $ \Pi^\la \hookrightarrow \Pi $ induces maps between all the terms. And by Lemma \ref{lem. H(P_F,la)}, those maps between the $ E_1 $ terms are isomorphisms. Theorem 5.2.12 in \cite{weibel1995introduction} implies that the maps \[ H^i(G,\Pi^\la)\rightarrow H^i(G,\Pi) \] are isomorphisms.
 	\end{proof}  
 
 	
 	\begin{cor} \label{cor H(G,Pi^la) finite}
 		For any admissible $ L $-Banach space representation $ \Pi $ of $ G $, the cohomology groups $ H^i(G,\Pi^\la) $ with coefficients in the locally analytic vectors of $ \Pi $ are finite dimensional over $ L $.
 	\end{cor}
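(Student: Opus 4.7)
The statement follows almost immediately by combining Proposition \ref{prop: adm and la same} with the finite-dimensionality result for continuous cohomology of admissible Banach space representations, so my plan is essentially to chain these two inputs together. Concretely, I would invoke Corollary 6.10 of \cite{fust2021}, which states that for $\Pi \in \Badm_G(L)$ the continuous cohomology groups $H^i(G,\Pi)$ are finite dimensional over $L$ for every $i\ge 0$. This is the nontrivial external input, and the present corollary relies on it being available.

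Next, I would apply Proposition \ref{prop: adm and la same} just proved, which produces $L$-linear isomorphisms $H^i(G,\Pi^\la) \cong H^i(G,\Pi)$ induced by the inclusion $\Pi^\la \hookrightarrow \Pi$. Since this is an isomorphism of $L$-vector spaces (even before worrying about the topology, which is addressed later in the paper via Corollary \ref{intro cor hausdorff}), the finite dimensionality transfers from $H^i(G,\Pi)$ to $H^i(G,\Pi^\la)$ without any further work. One short sentence per $i\ge 0$ suffices.

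There is no genuine obstacle here: the whole point of Proposition \ref{prop: adm and la same} was to make exactly this kind of transfer of properties possible. The only thing to be careful about is that the isomorphism in Proposition \ref{prop: adm and la same} is a priori merely an isomorphism of abstract $L$-vector spaces, but that is all that is needed to deduce equality of $L$-dimensions. I would therefore keep the proof to a couple of lines, simply citing Corollary 6.10 of \cite{fust2021} and Proposition \ref{prop: adm and la same}.
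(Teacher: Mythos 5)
Your proposal matches the paper's own proof exactly: both cite Proposition \ref{prop: adm and la same} to identify $H^i(G,\Pi^\la)$ with $H^i(G,\Pi)$ as $L$-vector spaces, and then invoke Corollary 6.10 of \cite{fust2021} for the finite-dimensionality of the latter. Nothing is missing and no further comment is needed.
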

 
 	\begin{proof}
 		This follows immediately from Proposition \ref{prop: adm and la same} combined with the fact that the continuous cohomology groups $ H^i(G,\Pi) $ for an admissible $ L $-Banach space representation $ \Pi $ is finite dimensional (cf. Corollary 6.10 in \cite{fust2021}).
 	\end{proof}
	 
	 \subsection{Topology on the cohomology groups}
	 
	 Let $ V $ be a Hausdorff topological $ G $-module. Note that the continuous cohomology groups $ H^i(G,V) $ inherit a quotient topology from the compact-open topology on the cochains. We call this the \emph{canonical topology}. The compact-open topology on the cochains $ C^i(G,V) $ is Hausdorff, since for any pair of continuous maps $ f,f':G^{i+1}\rightarrow V $ in $ C^i(G,V) $ with $ f\neq f' $, we can find disjoint open neighborhoods of $ f $, $ f' $, respectively, as follows: There is an element $ g\in G^{i+1} $ such that $ f(g)\neq f'(g) $ and since $ V $ is Hausdorff by assumption, we can find open disjoint neighborhoods $ U\ni f(g) $, $ U'\ni f'(g) $, $ U\cap U'=\emptyset$. But then we have open disjoint neighborhoods $ \Omega(\{g\},U)\ni f $ and $ \Omega(\{g\},U') \ni f'$.
	 
	 Since $ C^i(G,V) $ is Hausdorff, so is $ C^i(G,V)^G $ as a subspace of a Hausdorff space. And the space of $ i $-cocycles $ Z^i(G,V)=\ker(d^i:C^i(G,V)^G\rightarrow C^{i+1}(G,V)^G) $ is also Hausdorff and moreover, it is a closed subspace of $ C^i(G,V)^G $ as it is the kernel of a continuous map between Hausdorff spaces. This also implies that the quotient topology on the $ i $-coboundaries given by $ B^i(G,V)\cong C^{i-1}(G,V)^G/Z^{i-1}(G,V) $ is Hausdorff.
	 
	 The quotient topology on the cohomology groups is in general not Hausdorff. However, Proposition 6 in \cite{CW} gives a nice criterion for the cohomology groups being Hausdorff, which almost fits our setting. We need to assume though that the group $ G $ is $ \sigma $-compact, which means that it can be written as a countable union of compact subsets. If $ F $ is a finite extension of $ \Q_p $, then the group $ \GL_n(F) $ is $ \sigma $-compact. This can be shown by using the Cartan decomposition (cf. Section 3.2 in \cite{bernshtein1976representations}). 
	 
	 As a consequence, we know that any $ p $-adic reductive group $ G=\mathbb{G}(F) $, where $ \mathbb{G} $ is a linear algebraic group, is also $ \sigma $-compact, since it embeds into $ \GL_n(F) $ as a closed subgroup for some $ n $. We then obtain the following:
	 
	 \begin{cor}\label{cor Hausdorff}
	 	Assume that $ G $ is a $ p $-adic reductive group and $ \Pi $ is an admissible $ L $-Banach space representation of $ G $. Then the canonical topologies on the cohomology groups $ H^i(G,\Pi)$ and $H^i(G,\Pi^\la) $ are Hausdorff for all $ i\ge 0 $.
	 \end{cor}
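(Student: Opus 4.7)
The plan is to apply the Hausdorffness criterion of Proposition 6 in \cite{CW} to both $H^i(G,\Pi)$ and $H^i(G,\Pi^\la)$. As the discussion preceding the corollary spells out, that criterion requires $G$ to be $\sigma$-compact together with a finite-dimensionality/closedness input on the cohomology, and both inputs are already in place: the $p$-adic reductive group $G = \mathbb{G}(F)$ embeds as a closed subgroup of some $\GL_n(F)$ and is therefore $\sigma$-compact by the Cartan decomposition, while $H^i(G,\Pi)$ is finite-dimensional by Corollary 6.10 in \cite{fust2021} and $H^i(G,\Pi^\la)$ is finite-dimensional by Corollary \ref{cor H(G,Pi^la) finite}.

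For $\Pi$, which is a Banach $L$-vector space, the topological prerequisites of \cite[Proposition 6]{CW} are met directly, and the criterion immediately yields the Hausdorff property of $H^i(G,\Pi)$. For $\Pi^\la$ the main subtlety, and the step I expect to be the most delicate, is that Emerton's topology is an inductive limit of Banach spaces rather than a Banach topology, so it is not a priori clear that the framework of \cite{CW} applies verbatim. The cleanest workaround is to exploit the continuous inclusion $\Pi^\la \hookrightarrow \Pi$: it induces a continuous map $C^i(G,\Pi^\la) \to C^i(G,\Pi)$ of cochain complexes in the compact-open topology, sending cocycles to cocycles and coboundaries to coboundaries, and therefore descends by the universal property of the quotient topology to a continuous $L$-linear map $H^i(G,\Pi^\la) \to H^i(G,\Pi)$ on canonical topologies.

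By Proposition \ref{prop: adm and la same} this induced map is bijective, hence in particular a continuous linear injection into the Hausdorff space $H^i(G,\Pi)$; any such injection forces its source to be Hausdorff, which gives the statement for $\Pi^\la$. So the plan is: first verify the CW criterion for the Banach case $\Pi$, then transfer Hausdorffness to $\Pi^\la$ via continuity of the natural map and Proposition \ref{prop: adm and la same}. The primary obstacle is purely bookkeeping on the side of $\Pi^\la$: confirming that coboundaries really do map to coboundaries and that the compact-open topology on $C^i(G,\Pi^\la)$ is fine enough to make the cochain map continuous, after which everything collapses to formal quotient-topology manipulations.
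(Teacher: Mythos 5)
Your proposal matches the paper's proof in essence: apply \cite[Proposition 6]{CW} to $H^i(G,\Pi)$ (using $\sigma$-compactness of $G$ and finite dimensionality from \cite[Corollary 6.10]{fust2021}, with $\Pi$ complete and metrizable as a Banach space), then transfer Hausdorffness to $H^i(G,\Pi^\la)$ via the continuous bijection $H^i(G,\Pi^\la)\to H^i(G,\Pi)$ furnished by Proposition \ref{prop: adm and la same}. The one place the paper does noticeably more work than your ``immediately yields'' is in unwinding the conclusion of \cite[Proposition 6]{CW}, which gives the \emph{strongly Hausdorff} property (a continuous section $s\colon Z^i(G,\Pi)\to B^i(G,\Pi)$) rather than Hausdorffness of the quotient directly; the paper then shows $\ker(s)\to H^i(G,\Pi)$ is a homeomorphism to conclude, a routine but nontrivial step that your sketch glosses over but does not invalidate.
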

 
 	\begin{proof}
 		Corollary 6.10 in \cite{fust2021} implies that the cohomology groups are finite dimensional. Moreover, since $ \Pi $ is assumed to be an $ L $-Banach space representation, it is a complete metrizable $ L $-vector space. We can hence apply Proposition 6 in \cite{CW}, which states that the cohomology groups are strongly Hausdorff, i.e.~the inclusion of $ i $-coboundaries $ B^i(G,\Pi) $ into $ i $-cocycles $ Z^i(G,\Pi) $ has a continuous section, say $ s:Z^i(G,\Pi)\rightarrow B^i(G,\Pi) $. We claim that the continuous $ L $-linear bijection \begin{align*}
	 		\ker(s)\hookrightarrow Z^i(G,\Pi)\overset{\pi}{\rightarrow} H^i(G,\Pi)
 		\end{align*}
 		is a homeomorphism. It suffices to show that the map is open. For this, let $ U\subset \ker(s) $ be an open subset. This means that there is an open subset $ V\subset Z^i(G,\Pi) $ such that $ U=V\cap \ker(s) $. Since the map $ \id - s:Z^i(G,\Pi)\rightarrow Z^i(G,\Pi) $ is continuous, the set $ V':=(\id-s)\inv(V)=\{z\in Z^i(G,\Pi)\vert z-s(z)\in V\} $ is open in $ Z^i(G,\Pi) $. Moreover, the intersection  \begin{align*}
 		V'\cap \ker(s)&=\{z \in Z^i(G,\Pi)\vert z-s(z)\in V \text{ and }s(z)=0\}  \\
 		&=\{z \in Z^i(G,\Pi)\vert s(z)=0\text{ and }z\in V\} \\&=V\cap \ker(s)
 		\end{align*}coincides with the intersection $ V\cap \ker(s)=U $. But then we have \[ \pi(U)=\pi(V\cap \ker(s))=\pi(V'\cap \ker(s))=\pi(V')\text{ open,} \]
 		since for any $ z\in V' $, $ \pi(z)=\pi(z)-\pi(s(z))=\pi(z-s(z)) $ with $ z-s(z)\in V\cap \ker(s) $.
 		
 		As the kernel of a continuous map between Hausdorff spaces, $ \ker(s) $ is Hausdorff and thus so is $ H^i(G,\Pi) $.
 		
 		By Proposition \ref{prop: adm and la same}, the continuous $ L $-linear map $ H^i(G,\Pi^\la)\rightarrow H^i(G,\Pi) $ is a bijection. This implies that the topology on $ H^i(G,\Pi^\la) $ is finer than (or equal to) the topology on $ H^i(G,\Pi) $ which is Hausdorff, so that the groups $ H^i(G,\Pi^\la) $ are also Hausdorff.
 	\end{proof}
 
 	\begin{lem} \label{lem homeo check on subbasis}
 		Let $ V $ and $ W $ be topological $ L $-vector spaces and $ \phi:V\overset{\cong}{\rightarrow}W $ be an $ L $-linear isomorphism. Assume that $ \mathcal{B} $ is a subbasis of open neighborhoods of $ 0\in V $, such that the set $ \phi(\mathcal{B})=\{\phi(U)\vert\ U\in \mathcal{B} \} $ is a subbasis of open neighborhoods of $0\in W $. Then the isomorphism $ \phi $ is a homeomorphism.
 	\end{lem}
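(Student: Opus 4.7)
The plan is to reduce the statement to continuity at the origin in both directions, which is the only place the subbasis hypothesis directly applies. Since $\phi$ is $L$-linear, it is in particular a homomorphism for the additive group structure; because $V$ and $W$ are topological $L$-vector spaces, translations by a fixed vector are homeomorphisms of each space. Consequently, for every $v \in V$ one has $\phi(v+\cdot) = \phi(v) + \phi(\cdot)$, so continuity of $\phi$ at $v$ is equivalent to continuity of $\phi$ at $0$, and similarly for $\phi^{-1}$. It therefore suffices to prove that $\phi$ and $\phi^{-1}$ are continuous at $0$.

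For continuity of $\phi$ at $0$, let $N \subseteq W$ be any open neighborhood of $0$. By hypothesis, $\phi(\mathcal{B})$ is a subbasis of open neighborhoods of $0 \in W$, so there exist $U_1,\dots,U_n \in \mathcal{B}$ with $\bigcap_{i=1}^n \phi(U_i) \subseteq N$. Because $\phi$ is a bijection, preimages commute with intersections and satisfy $\phi^{-1}(\phi(U_i)) = U_i$, so
\[
\phi^{-1}(N) \supseteq \phi^{-1}\Bigl(\bigcap_{i=1}^n \phi(U_i)\Bigr) = \bigcap_{i=1}^n U_i.
\]
The right-hand side is a finite intersection of open neighborhoods of $0$ in $V$, hence an open neighborhood of $0$, and so $\phi^{-1}(N)$ is a neighborhood of $0$ in $V$. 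This proves continuity of $\phi$ at $0$.

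The argument for $\phi^{-1}$ is symmetric: given an open neighborhood $M$ of $0 \in V$, the subbasis $\mathcal{B}$ provides $U_1,\dots,U_n \in \mathcal{B}$ with $\bigcap_{i=1}^n U_i \subseteq M$. Since $\phi$ is a bijection, images commute with intersections, so $(\phi^{-1})^{-1}(M) = \phi(M) \supseteq \bigcap_{i=1}^n \phi(U_i)$; this latter set is a neighborhood of $0$ in $W$ because $\phi(\mathcal{B})$ is a subbasis of neighborhoods of $0$ there. Combining both directions with the translation argument yields that $\phi$ is a homeomorphism. There is no real obstacle here — the only point requiring a touch of care is bookkeeping of images versus preimages, which is handled cleanly by the bijectivity of $\phi$.
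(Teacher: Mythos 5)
Your proof is correct and uses essentially the same strategy as the paper: reduce to the origin via translation homeomorphisms and then apply the subbasis hypothesis there. The paper phrases this by picking a point $v$ in the preimage of an open set and translating, whereas you first state the general principle that continuity at $0$ suffices and then check both directions; the underlying argument is the same.
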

 
 	\begin{proof}
 		By symmetry, it is enough to show that the map $ \phi $ is continuous. For this, let $ U\subset W $ be an open set. We show that $ \phi\inv(U) $ is open in $ V $, by showing that each element $ v\in \phi\inv(U) $ has an open neighborhood that is contained in $ \phi\inv(U) $.
 		
 		Since $ W $ is a topological vector space, the translated set $ -\phi(v)+U $ is again open in $ W $ and contains the zero element. By assumption, we can find finitely many open neighborhoods of $ 0 $ in $ V $, $ V_1,\dots,V_n \in \mathcal{B}$, such that the intersection $ \bigcap_{i=1}^n\phi(V_i) $ is contained in $ -\phi(v)+U $. This implies that the intersection of open sets $ \bigcap_{i=1}^n(v+V_i) $ is contained in the inverse image $ \phi\inv(U) $ and contains $ v $. Therefore, the map is indeed continuous. 
 	\end{proof}
 
 	\begin{prop}\label{prop C(G,V) locally convex}
 		Let $ V $ be a topological $ G $-module on a locally convex $ L $-vector space. Then the compact-open topology on the space of cochains $ C^i(G,V) $ is locally convex, for every $ i\ge 0 $.
 	\end{prop}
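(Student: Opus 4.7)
The plan is to produce an explicit neighborhood basis of $0 \in C^i(G,V)$ consisting of open $\mathcal{O}_L$-submodules (lattices), since by Schneider's definition this is precisely what local convexity of an $L$-vector space topology demands.

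First I would restrict attention to the right subbasis. A subbasis of neighborhoods of $0$ for the compact-open topology on $C^i(G,V) = C(G^{i+1},V)$ is given by the sets $\Omega(K,U) = \{f : f(K) \subset U\}$, where $K \subset G^{i+1}$ is compact and $U \subset V$ is an open neighborhood of $0$. Because $V$ is locally convex, every such $U$ contains an open lattice $U' \subset U$, and $\Omega(K,U') \subset \Omega(K,U)$; so without loss of generality I may assume $U$ ranges over open lattices in $V$.

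Next I would verify that each such $\Omega(K,U)$ is itself a lattice in $C^i(G,V)$. The $\mathcal{O}_L$-module closure is automatic: if $f,g \in \Omega(K,U)$ and $\lambda,\mu \in \mathcal{O}_L$, then for every $k \in K$ one has $(\lambda f + \mu g)(k) = \lambda f(k) + \mu g(k) \in \lambda U + \mu U \subset U$. The subtler point is showing $\Omega(K,U)$ is absorbing. Given $f \in C^i(G,V)$, the image $f(K)$ is compact in $V$, and one needs to produce $a \in L^\times$ with $af(K) \subset U$. This reduces to the non-archimedean fact that compact subsets of a locally convex $L$-vector space are bounded: one covers $f(K)$ by finitely many sets $\varpi^{-n} U$ (using that $U$ is absorbing), and since these are $\mathcal{O}_L$-modules they are nested, so taking the largest index gives the desired scalar.

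Finally, since finite intersections of lattices are again lattices, the collection of finite intersections of sets $\Omega(K,U)$ with $U$ an open lattice in $V$ and $K \subset G^{i+1}$ compact forms a neighborhood basis of $0$ in $C^i(G,V)$ consisting of open $\mathcal{O}_L$-submodules. By translation invariance, this proves local convexity. The only mildly non-formal step is the boundedness-of-compact-sets argument used to check the absorbing property of $\Omega(K,U)$; everything else is a direct manipulation of the definition of the compact-open topology.
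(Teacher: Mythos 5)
Your proof is correct but takes a genuinely different route from the paper's. The paper reduces to $i=0$ via the homeomorphism $C^i(G,V)\cong C^0(G,C^{i-1}(G,V))$ of Lemma~\ref{Cn=C0}, builds an explicit family of sup-seminorms $q_{K,i}(f)=\sup_{k\in K}q_i(f(k))$ on $C(G,V)$, and uses Lemma~\ref{lem homeo check on subbasis} to match the resulting locally convex topology with the compact-open one through the subbasis $\Omega(K,q_i^{-1}(B_\epsilon))$. You instead use the lattice characterization of local convexity and work directly on $C(G^{i+1},V)$: the sets $\Omega(K,U)$ with $U$ an open lattice are open $\mathcal{O}_L$-submodules, they are absorbing because $f(K)$ is compact and hence swallowed by $U$ (cover $f(K)$ by the nested sets $\varpi^{-n}U$ and take the largest exponent from a finite subcover), and finite intersections of them give a neighborhood basis of $0$ by open lattices. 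This is a little shorter, avoids the implicit induction behind the reduction to $C^0$, and does not need Lemma~\ref{lem homeo check on subbasis}; the paper's version has the advantage of exhibiting the compact-open topology concretely as a sup-seminorm topology. If you write your version up, do make the last step explicit: a finite intersection of lattices is again a lattice, since the $\mathcal{O}_L$-module property is immediate and for the absorbing property you pick, for each lattice $L_j$ in the intersection, an $n_j$ with $\varpi^{n_j}v\in L_j$ and then take $n=\max_j n_j$, using that each $L_j$ is an $\mathcal{O}_L$-module.
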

 
 	\begin{proof}
 		By the heomeomorphism $ C^i(G,V)\cong C^0(G,C^{i-1}(G,V)) $ (cf. Lemma \ref{Cn=C0}), it is enough to show that $ C(G,V)=C^0(G,V) $ is locally convex.
 		
 		Say the locally convex topology on $ V $ is defined by the family of seminorms $ \{q_i\}_{i\in I} $. For every compact subset $ K\subset G $, and $ i\in I $, we define a seminorm on $ C(G,V) $ by \begin{align*}
 		q_{K,i}:C(G,V)&\rightarrow \mathbb{R},\\
 		f&\mapsto \sup_{k\in K} q_i(f(k))
 		\end{align*}
 		Denote by $ C(G,V)_{\lc} $ the space $ C(G,V) $ equipped with the locally convex topology given by the family $ \{q_{K,i}\}_{K,i} $ of seminorms and by $ C(G,V)_{\co} $ the space $ C(G,V) $ equipped with the compact-open topology. We claim that the identity map $ C(G,V)_{\co}\rightarrow C(G,V)_{\lc} $ is a homeomorphism. 
 		
 		By Lemma \ref{lem homeo check on subbasis}, it is enough to find a set of neighborhoods of $ 0\in C(G,V) $ that forms a subbasis of open neighborhoods in both topologies. By definition, a subbasis of open neighborhoods of $ 0\in C(G,V)_{\lc} $ is given by sets of the form $ q_{K,i}\inv(B_\epsilon) $, for $ K\subset G $ compact, $ i\in I $, $ \epsilon >0 $. Here, $ B_\epsilon $ denotes the open ball of radius $ \epsilon  $ around $ 0\in \mathbb{R} $. We can rewrite these sets as follows: \begin{align*}
 		q_{K,i}\inv(B_\epsilon) &=\{ f\in C(G,V)\vert\ q_{K,i}(f) <\epsilon \}\\
 		&= \{ f\in C(G,V) \vert\ \sup_{k\in K} q_i(f(k)) <\epsilon \}\\
 		&= \{ f\in C(G,V)\vert\ q_i(f(k)) <\epsilon,\ \forall k\in K \}\\
 		&= \{ f\in C(G,V)\vert\ f(k)\in q_i\inv(B_\epsilon)\ \forall k\in K \}\\
 		&=\Omega(K,q_i\inv(B_\epsilon)).
 		\end{align*}
 		These sets are open neighborhoods of $ 0 $ in $ C(G,V)_{\co} $. We want to show that they form a subbasis of open neighborhoods of $ 0 $. Indeed, by definition of the compact-open topology, a subbasis of open neighborhoods of $ 0 $ is given by sets of the form $ \Omega(K,U) $ for $ K\subset G $ compact and $ U\subset V $ open. But since $ V $ is locally convex, we can find finitely many open sets of the form $ q_{i_j}\inv(B_{\epsilon_j}) $, for $ j=1,\dots, m $, with the property that \[ \bigcap_{j=1}^mq_{i_j}\inv(B_{\epsilon_j})\subset U .\]
 		We thus obtain \[ \bigcap_{j=1}^m\Omega(K,q_{i_j}\inv(B_{\epsilon_j}))\subset \Omega(K,U), \]
 		and we see that the sets $ \Omega(K,q_i\inv(B_\epsilon)) $ form a subbasis of open neighborhoods of $ 0 $ in $ C(G,V)_{\co} $ as stated.
 	\end{proof}
 	
 	Since subspaces and quotients of locally convex vector spaces are again locally convex (see Section 5 in \cite{schneider2001nonarchimedean}), Proposition \ref{prop C(G,V) locally convex} implies the following corollary.
 
 	\begin{cor}\label{cor H(G,V) locally convex}
 		For every topological $ G $-module on a locally convex $ L $-vector space, the canonical topology on the continuous cohomology groups $ H^i(G,V) $ is locally convex.
 	\end{cor}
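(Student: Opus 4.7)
The plan is to deduce this purely formally from Proposition \ref{prop C(G,V) locally convex}, using the standard permanence properties of local convexity under the operations of passage to subspaces and quotients (see Section 5 of \cite{schneider2001nonarchimedean}).

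First, I would recall that by Proposition \ref{prop C(G,V) locally convex}, each $C^i(G,V)$ carries a locally convex topology (compact-open). The subspace $C^i(G,V)^G \subseteq C^i(G,V)$ of $G$-invariants then inherits a locally convex topology as a subspace. Next, the space of cocycles $Z^i(G,V) = \ker(d^i\colon C^i(G,V)^G \to C^{i+1}(G,V)^G)$ is itself a subspace of $C^i(G,V)^G$, so it too is locally convex.

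Finally, the space of coboundaries $B^i(G,V) \subseteq Z^i(G,V)$ is a subspace, and by the definition of the canonical topology we have $H^i(G,V) = Z^i(G,V)/B^i(G,V)$ endowed with the quotient topology. Since quotients of locally convex spaces by arbitrary subspaces are again locally convex, this yields the claim.

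There is essentially no obstacle here: the entire content lies in Proposition \ref{prop C(G,V) locally convex}, which has already been established, and in the elementary fact that local convexity is preserved by subspaces and quotients. The only small subtlety worth flagging is that one should identify the canonical topology on $H^i(G,V)$ with the subquotient topology on $Z^i(G,V)/B^i(G,V)$ inherited from $C^i(G,V)^G$, which is immediate from the definitions of the boundary maps and the quotient topology.
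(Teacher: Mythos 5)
Your proof is correct and follows exactly the same reasoning the paper gives: the corollary is stated immediately after a remark that subspaces and quotients of locally convex spaces remain locally convex, and this is applied to Proposition \ref{prop C(G,V) locally convex} precisely as you do. Your version just spells out the chain $C^i(G,V) \supseteq C^i(G,V)^G \supseteq Z^i(G,V) \twoheadrightarrow H^i(G,V)$ more explicitly, which the paper leaves implicit.
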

 
 	\begin{cor}\label{cor finest locally convex}
 		For every admissible $ L $-Banach space representation $ \Pi $ of $ G $, the continuous cohomology groups $ H^i(G,\Pi) $ and $ H^i(G,\Pi^\la) $, equipped with their canonical topologies, are finite dimensional locally convex Hausdorff $ L $-vector spaces. In particular, their topology is uniquely determined, namely it is the finest locally convex topology. 
 		
 		In particular, the isomorphisms $ H^i(G,\Pi^\la)\cong H^i(G,\Pi) $ are homeomorphisms.
 	\end{cor}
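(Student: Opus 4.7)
The plan is to assemble the three ingredients already proved -- finite dimensionality, local convexity, and Hausdorffness of the canonical topologies -- and invoke the uniqueness of the finest locally convex topology on a finite dimensional $L$-vector space.

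First I would combine Corollary \ref{cor H(G,Pi^la) finite} (which gives that $H^i(G,\Pi)$ and $H^i(G,\Pi^\la)$ are finite dimensional over $L$), Corollary \ref{cor H(G,V) locally convex} (which gives that their canonical topologies are locally convex, noting that both $\Pi$ and $\Pi^\la$ are locally convex $L$-vector spaces), and Corollary \ref{cor Hausdorff} (which gives that these canonical topologies are Hausdorff). Together these say that $H^i(G,\Pi)$ and $H^i(G,\Pi^\la)$ are finite dimensional locally convex Hausdorff $L$-vector spaces, which is the first assertion of the corollary.

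Next I would invoke Proposition 4.13 in \cite{schneider2001nonarchimedean}, recalled in the remark of the introduction: on a finite dimensional $L$-vector space, there is a unique locally convex Hausdorff topology, namely the finest locally convex topology (equivalently the maximum-of-coordinates norm topology associated to any choice of basis). Applied to our two cohomology groups, this immediately identifies their canonical topologies with the finest locally convex topology and gives the second assertion.

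Finally, to deduce that the isomorphism $H^i(G,\Pi^\la) \xrightarrow{\cong} H^i(G,\Pi)$ from Proposition \ref{prop: adm and la same} is a homeomorphism, I would observe that the inclusion $\Pi^\la \hookrightarrow \Pi$ is continuous, hence induces continuous maps $C^i(G,\Pi^\la) \to C^i(G,\Pi)$ in the compact-open topologies, and therefore a continuous $L$-linear map on canonical quotients $H^i(G,\Pi^\la) \to H^i(G,\Pi)$. Since both source and target carry the unique finest locally convex Hausdorff topology on a finite dimensional $L$-vector space, any $L$-linear bijection between them is automatically bicontinuous, so the map is a homeomorphism. No step in this outline is technically difficult; the only mild subtlety is checking that the inclusion really induces a continuous map at the level of cohomology, which is immediate from functoriality of the compact-open topology and the quotient construction.
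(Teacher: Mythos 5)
Your proposal is correct and follows the same route as the paper: assemble finite dimensionality, local convexity, and Hausdorffness, then invoke the uniqueness of the locally convex Hausdorff topology on a finite dimensional $L$-vector space from Proposition 4.13 in \cite{schneider2001nonarchimedean}. The only cosmetic difference is that your final step first checks continuity of the induced map $H^i(G,\Pi^\la)\to H^i(G,\Pi)$ before observing bicontinuity is automatic; as you yourself note, once both sides carry the unique finest locally convex topology, any $L$-linear bijection is a homeomorphism, so that check is redundant (the paper omits it).
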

 
 	\begin{proof}
 		By Corollary 6.10 in \cite{fust2021} and Corollaries \ref{cor Hausdorff} and \ref{cor H(G,V) locally convex}, we know that the cohomology groups are finite dimensional, Hausdorff and locally convex. Therefore, we can apply Proposition 4.13 in \cite{schneider2001nonarchimedean}, which proves that there is only one choice of locally convex Hausdorff topology on a finite dimensional $ L $-vector spaces. The author gives an explicit description of this topology as the one defined by the norm $\Vert \sum_{i=1}^n\lambda_i e_i\Vert:=\max_{1\le i \le n}\vert \lambda_i\vert  $, for any choice of $ L $-basis $ e_1,\dots,e_n $ of the vector space. 

 		Since the canonical topologies on $ H^i(G,\Pi^\la) $ and $ H^i(G,\Pi) $ are uniquely determined and the groups are isomorphic as $ L $-vector spaces (Proposition \ref{prop: adm and la same}), the topologies must agree and the isomorphism is a homeomorphism of topological $ L $-vector spaces.
 	\end{proof}

	\bibliographystyle{siam}
	\bibliography{thesis-test}
\end{document}